\newcommand{\co}{\colon\thinspace} % Colon with correct spacing for maps.
\newcommand{\fnote}[1]{\footnote{\small sharp1}}
\newcommand{\N}{{\mathbb N}}
\newcommand{\Z}{{\mathbb Z}}
\newcommand{\R}{{\mathbb R}}
\newcommand{\Q}{{\mathbb Q}}
\newcommand{\T}{{\mathbb T}}
\newcommand{\arcsinh}{{\mathrm{arcsinh}}}
\newcommand{\spt}{\mathrm{supp}}
\newcommand{\inter}{\mbox{Int}}
\newtheorem{theorem}{Theorem}[section]
\newtheorem{proposition}[theorem]{Proposition}
\newtheorem{definition}[theorem]{Definition}
\newtheorem{lemma}[theorem]{Lemma}
\newtheorem{conjecture}[theorem]{Conjecture} 
\newtheorem{question}[theorem]{Question} 
\title{On the homology length spectrum of surfaces}
\author{Daniel Massart, Hugo Parlier}
\date{\today}
\begin{document}

\begin{abstract}
On a surface with a Finsler metric, we investigate the asymptotic growth of the number of closed geodesics of length less than $L$ which minimize length among all geodesic multicurves in the same homology class. An important class of surfaces which are of interest to us are hyperbolic surfaces. 
\end{abstract}
\maketitle

\section{Introduction}
\subsection{The questions we ask}

Let $(M,m)$ be a closed, orientable manifold of dimension two, equipped with a Finsler metric. We are interested in the asymptotic growth, as $T$ grows, of a certain set of closed geodesics of length less than $T$.

Let us denote $\mathcal{G}_0$ the set of all closed geodesics of $(M,m)$ and for $T \in \R^*_+$ by $\mathcal{G}_0(T)$ the subset of $\mathcal{G}_0$ which consists of geodesics of length less than or equal to $T$. When $m$ is a Riemannian metric of pinched negative curvature, Margulis \cite{Margulis} showed that the cardinality of $\mathcal{G}(T) $ grows like $e^T /T$. 

Similarly, for a fixed homology class $h$  in $H_1(M,\Z)$, one can count the number of closed geodesics with homology $h$ and of length less than $L$. The asymptotic growth 
of this number has also been studied, for instance by Philips and Sarnak \cite{Philips-Sarnak}. 

Here we are interested in a problem which is in some sense orthogonal to the latter: instead of picking a homology class and counting all closed geodesics therein, we consider all homology classes, and associate to each one (at most) one closed geodesic. We then estimate of the asymptotic growth of this set of closed geodesics. 

To be precise we need to introduce the stable norm of $m$. This is the function which maps each homology class $h$ in $H_1(M,\Z)$ to the smallest possible length of a union of geodesics representing $h$. This union will always be a weighted multicurve (disjoint union of simple closed geodesics, possibly with multiplicity).
%(weighted) geodesic multicurve which represents $h$. (By weighted multicurve we mean a formal sum of closed geodesics, with multiplicities.)
 Recall that since $M$ is orientable, $H_1(M,\Z)$ embeds as a lattice in $H_1(M,\R)$. The map thus defined extends to a norm on $H_1(M,\R)$, called stable norm and denoted $\| \|_g$ (\cite{Federer, GLP, gafa, nonor}). We denote by $\mathcal{B}_1$ its unit ball. We say a geodesic multicurve is minimizing if it minimizes the length among all multicurves within the same homology class. 

There are multiple counting problems one could investigate here. The first one that comes to mind is to find asymptotic estimates for the number of elements of $H_1(M,\Z)$ with stable norm less than $T$ when $T$ goes to infinity. By Minkovski's theorem, this is easily seen to be $\approx \mbox{Vol}(\mathcal{B}_1) T^{2g}$
where $g$ is the genus of $M$ (and thus the dimension of $H_1(M,\R)$ is $2g$). The quantity $\mbox{Vol}(\mathcal{B}_1)$ is the volume of the unit ball with respect to the following volume form. As $M$ is orientable, the algebraic intersection of oriented closed curves induces a symplectic form on $H_1(M,\R)$, which we denote $\inter (.,.)$. The $g$-th power of this symplectic form is a volume form on $H_1(M,\R)$ with respect to which the integer lattice $H_1(M,\Z)$ has determinant one. The volume $\mbox{Vol}(\mathcal{B}_1)$ is meant with respect to this volume form. 

However, we argue that this is not necessarily the most relevant counting problem when it comes to surfaces of genus greater than one. Indeed, it is proved in \cite{gafa} that infinitely many homology classes in $H_1(M,\Z)$ have a minimal representative which is a multicurve with exactly $g$ connected components. Furthermore, if a simple closed geodesic occurs as a connected component of a geodesic multicurve, then in fact it occurs as a connected component of infinitely many geodesic multicurves \cite{gafa}. Therefore, when counting geodesic multicurves, there is a lot of redundancy and it is thus of interest to count these connected components. 

It can be seen from \cite{gafa, nonor} that a closed geodesic $\gamma$ is a connected component of some minimizing multicurve if and only if it minimizes the length in its homology class. That is, $\|\left[\gamma\right]\| = \ell_m (\gamma)$, or equivalently, the homology class $\left[\gamma\right]/ \ell_m (\gamma)$ lies on the unit sphere $\partial \mathcal{B}_1$ of the stable norm. More precisely, $\gamma$ is a connected component of some minimizing multicurve if and only if the homology class $\left[\gamma\right]/ \ell_m (\gamma)$ is a vertex of the stable norm, that is, the unit ball has an open set of supporting hyperplanes at $\left[\gamma\right]/ \ell_m (\gamma)$.

So the objects we shall focus on are the closed geodesics which minimize length in their homology class (thus among all multicurves with the same homology). We denote the set of these by $\mathcal{G}$. For the sake of brevity we shall call such closed geodesics minimizing in their homology classes, or homologically minimal. 
%\marginpar{\tiny Que penses tu de "homologically minimal" comme terme?}
Our purpose here is to find asymptotic estimates, when $T \rightarrow + \infty$, for the cardinality $N(T)$ of the set
$$
H(T):= \{ \left[\gamma\right] \co \gamma \in \mathcal{G},\ \ell_m (\gamma) \leq T \}.
$$
Again by Minkovski's theorem, $N(T)$ is bounded from above by $ \mbox{Vol}(\mathcal{B}_1) T^{2g}$. How far below this estimate the actual growth is, is a measure of how few closed geodesics minimize the length in their homology class, or how many homology classes are minimized by (non-connected) multicurves. 

Observe that when $M$ is not orientable, the unit ball of the stable norm could be a finite polyhedron (see \cite{nonor}), in which case there are only finitely many homology classes of elements of $\mathcal{G}$. This however never happens for orientable surfaces, so the asymptotic counting problem will always be non-trivial in this case. 

Although we have scant evidence as to what the answer might be, we ask the following question to get the ball rolling.
\begin{question}\label{question 1}
Does $N(T)$ grow quadratically for all $(M,m)$? 
\end{question}

When $M$ is a torus, this follows from Minkovski's theorem. Although our setup does not allow for this, the same questions can be asked when the surface is hyperbolic torus with a cusp, a case well investigated by McShane and Rivin \cite{McS-R1, McS-R2} where among things they prove asymptotic quadratic growth and a bound on the error term. 
Furthermore, minimizing geodesics do not pass through separating ``long thin necks" (see Lemma \ref{lem:longthinnecks} for the exact definition and statement). When $M$ is such that these ``necks" cut the surfaces into parts of genus at most $1$, $N(T)$ becomes a finite sum of functions which grow quadratically and thus also grows quadratically. This is discussed in more detail in section \ref{sec:longthicknecks}.

There is a connection between the problem we investigate and the question of counting closed trajectories of a polygonal billiard table with angles commensurable to $\pi$. Such closed trajectories correspond to closed geodesics of a flat surface with cone point singularities (see \cite{KMS}). The closed geodesics which come from closed billiard trajectories are easily seen to be minimizing in their homology classes (but not all minimizing geodesics come this way). A positive answer to Question \ref{question 1} would thus provide a generalization of Masur's quadratic upper bound on the number of strips of parallel closed billiard trajectories \cite{Masur}. Furthermore, Masur's bound provides further evidence that the correct bound is indeed quadratic. 

\subsection{The answers we give}
Now that we have introduced the problem, let us lay out what we do in this paper. We want to organize the set $\mathcal{G}$ into families which grow quadratically and, for each family, provide a geometric interpretation of the quadratic constant. For simplicity, and for the remainder of the introduction, we suppose that $M$ is of genus two.

Let us fix an element of $\mathcal{G}$, that is, a simple, closed, homologically minimizing geodesic $\gamma$. Denote by $\mathcal{G}_{\gamma}$ the set of all elements $\delta$ of $\mathcal{G}$, such that the reunion of $\gamma$ and $\delta$ is a homologically minimizing multicurve. The sets $\mathcal{G}_{\gamma}$, when $\gamma$ ranges over $\mathcal{G}$, are the families into which we organize (but {\it not} partition) $\mathcal{G}$. So, denoting
\begin{itemize}[itemsep=2ex,leftmargin=0.5cm]
\item $\mathcal{G}_{\gamma}(L)$ the set of elements of $\mathcal{G}_{\gamma}$ of length $\leq L$
\item $H_{\gamma}(L)$ the set of homology classes of elements of $\mathcal{G}_{\gamma}(L)$
\item $N_{\gamma}(L)$ the cardinality of $H_{\gamma}(L)$,
\end{itemize}
we want to prove that $N_{\gamma}(L)/L^2$ has a limit when $L$ goes to infinity, and provide a geometric interpretation for this limit. 

Since closed geodesics which minimize the length in their homology class correspond to vertices of the unit ball of the stable norm, the approach we propose to our counting problem is to understand the set of vertices of the unit ball. 

First let us observe that for any $\delta$ in $\mathcal{G}_{\gamma}$, since $\gamma$ and $\delta$ are disjoint, we have $\inter (\left[\gamma\right],\left[\delta\right])=0$, that is, $\left[\delta\right]$ lies in the symplectic orthogonal $\left[\gamma\right]^{\perp}$ of $\left[\gamma\right]$. As the genus of $M$ is $2$, this symplectic orthogonal is a $3$-dimensional subspace of $H_1(M,\R)$, which contains $\left[\gamma\right]$. Furthermore, the kernel of the restriction to $\left[\gamma\right]^{\perp}$ of the symplectic form $\inter(.,.)$ is the straight line generated by $\left[\gamma\right]$. So the quotient space $\left[\gamma\right]^{\perp}/ \left[\gamma\right]$ inherits a symplectic structure, which will be useful in the sequel. 

Moreover, by \cite{gafa,nonor} (see Theorem \ref{thm-gafa}) we know the following facts :
\begin{itemize}[itemsep=2ex,leftmargin=0.5cm]
\item
for any $\delta$ in $\mathcal{G}_{\gamma}$, the unit ball $\mathcal{B}_1$ of the stable norm has an edge joining $\left[\gamma\right]/ \| \left[\gamma\right]\|$ to $\left[\delta\right]/ \| \left[\delta\right]\|$
\item
any edge of $\mathcal{B}_1$ starting from $\left[\gamma\right]/ \| \left[\gamma\right]\|$ is contained in $\left[\gamma\right]^{\perp}$
\item
for any non-zero element $v$ of the quotient space $\left[\gamma\right]^{\perp}/ \left[\gamma\right]$, there exists an edge $e$ of $\mathcal{B}_1$ starting from $\left[\gamma\right]/ \| \left[\gamma\right]\|$, which projects to the straight line segment $\left[ 0, \lambda v \right]$ in $\left[\gamma\right]^{\perp}/ \left[\gamma\right]$, for some positive $\lambda$.
\end{itemize}
It might be useful for the reader to get a mental picture of the set of edges of $\mathcal{B}_1$ starting from $\left[\gamma\right]/ \| \left[\gamma\right]\|$. The intersection of the unit sphere $\partial \mathcal{B}_1$ with $\left[\gamma\right]^{\perp}$, which has dimension $3$, is topologically a two-sphere. The homology class $\left[\gamma\right]/ \| \left[\gamma\right]\|$ is a vertex of this topological two-sphere. There is a neighborhood $V$ of $\left[\gamma\right]/ \| \left[\gamma\right]\|$ in $\partial \mathcal{B}_1 \cap \left[\gamma\right]^{\perp}$ such that for any homology class $h$ in $V$, the straight segment joining $h$ to $\left[\gamma\right]/ \| \left[\gamma\right]\|$ is contained in $\partial \mathcal{B}_1 \cap \left[\gamma\right]^{\perp}$. We denote by $\tilde{\mathcal{E}}(\gamma)$ the reunion of all edges of $\mathcal{B}_1$ starting from $\left[\gamma\right]/ \| \left[\gamma\right]\|$.
 Thus $\tilde{\mathcal{E}}(\gamma)$ projects to a compact neighborhood $\mathcal{E}(\gamma)$ of $0$ in the quotient space $\left[\gamma\right]^{\perp}/ \left[\gamma\right]$.

For instance, when $M$ is a surface with long thin necks (as described in Section \ref{sec:longthicknecks} and illustrated in Figure \ref{fig:genus5}), for any $x,y \in \R$, the homology class
$$
\frac{x \left[ \delta\right] + y \left[ \beta\right]}{\| x \left[ \delta\right] + y \left[ \beta\right] \|}
$$

is an endpoint of some edge starting from $\left[\gamma\right]/ \| \left[\gamma\right]\|$. We can identify the quotient space $\left[\gamma\right]^{\perp}/ \left[\gamma\right]$ with the subspace of $H_1(M,\R) $ generated by $\left[\delta\right]$ and $\left[\beta\right]$. Then the neighborhood $\mathcal{E}(\gamma)$ is simply the set
$$
\{ \frac{x \left[ \delta\right] + y \left[ \beta\right]}{\| x \left[ \delta\right] + y \left[ \beta\right] \|} \co x,y \in \R\}
$$
which is also the unit ball of the stable norm of the torus with one hole obtained by cutting $M$ along the short separating geodesic in the middle of the neck and discarding the left-hand side. For a general surface $M$ there is no reason why all endpoints of edges starting from $\left[\gamma\right]/ \| \left[\gamma\right]\|$ should be co-planar, which is why we introduce the quotient space $\left[\gamma\right]^{\perp}/ \left[\gamma\right]$.

Proposition \ref{injection F Gamma} says that under some genericity assumption on the metric $m$, the elements of $\mathcal{G}_{\gamma}$ are in 1-to-1 correspondance with the integer vectors in the vector space $\left[\gamma\right]^{\perp}/ \left[\gamma\right]$. Furthermore, for any $L \geq 0$, the elements of $\mathcal{G}_{\gamma}(L)$ are in 1-to-1 correspondance with the integer vectors in 
$$
L \mathcal{E}(\gamma) = \{ tx \co t \in \left[ 0,L \right], x \in \mathcal{E}(\gamma) \}.
$$
A variation on the classical Minkovski theorem (Proposition \ref{appendice}) then says that $N_{\gamma}(L)/L^2$ converges to the volume, with respect to the symplectic structure on the quotient space $\left[\gamma\right]^{\perp}/ \left[\gamma\right]$, of the compact set $\mathcal{E}(\gamma)$.

For a surface of genus $g > 2$, we have to adjust our strategy a little bit : instead of fixing an element $\gamma$ of $\mathcal{G}$, we fix $g-1$ elements $\gamma_1,\ldots \gamma_{g-1}$ 
 elements of $\mathcal{G}$, whose reunion $\Gamma$ is a minimizing multicurve, and we consider the subset $\mathcal{G}_{\Gamma}$ of $\mathcal{G}$ which consists of closed geodesics $\gamma$ such that the reunion of $\gamma$ and $\Gamma$ is a minimizing multicurve. Then we prove that $\mathcal{G}_{\Gamma}$, as $\mathcal{G}_{\gamma}$ in the genus 2 case, grows quadratically, again under some genericity assumption on the metric $m$.
%%%%%%%%%%%%%%%%%%
%\subsection{Questions left open}
The genericity hypothesis we alluded to in the previous paragraph is that in every homology class there is at most one minimizing multicurve. It would be interesting to know whether this hypothesis is truly necessary, and if it is, how restrictive it is. The word ``genericity" seems to indicate that metrics not satisfying this hypothesis are Baire meagre in the set of all metrics. However, we do not have a proof of this fact. It is true, however, in the smaller set of metrics of constant negative curvature, as outlined in Section \ref{sec:genericity}. In the larger setting of Finsler metrics, we believe the genericity could be proved using the machinery of \cite{BC}.

Now we would like to know how much our result really says about Question \ref{question 1}. Again, for simplicity we consider the genus two case. If, instead of fixing an element of $\mathcal{G}$, we consider a family $(\gamma_i)_{i \in I}$ of elements of $\mathcal{G}$, can we say something about the growth of the reunion
 $\bigcup_{i \in I} \mathcal{G}_{\gamma_i} \ $ ? Obviously we can if the family $(\gamma_i)_{i \in I}$ is finite, but even in that case the interpretation of the quadratic constant is not clear since for each $\gamma_i$ we have to use the symplectic structure of the quotient space $\left[\gamma_i\right]^{\perp}/ \left[\gamma_i\right]$.

It would be interesting to interpret the quadratic constants directly in terms of the symplectic structure of $H_1(M,\R)$. For this we suggest the following procedure.\

{\it Step 1}: Prove that for each $\gamma \in \mathcal{G}$, the set $\mathcal{F}_{\gamma} $ defined as the closure in $H_1(M,\R)$ of the set 
$$
\{ t\left[ \delta \right] \co \delta \in \mathcal{G}_{\gamma},\ t \in \left[0,1 \right] \}
$$
is rectifiable, so we can evaluate its symplectic area with respect to the symplectic form $\inter(.,.)$. Denote it $\Omega (\gamma)$.

{\it Step 2}: Prove that $\Omega (\gamma)$ equals the area, with respect to the symplectic structure on the quotient space $\left[\gamma\right]^{\perp}/ \left[\gamma\right]$, of the compact set $\mathcal{E}(\gamma)$.

What is interesting about this point of view is that assuming both points above are true, if we have a countable family $(\gamma_i)_{i \in I}$ of elements of $\mathcal{G}$, the reunion $\bigcup_{i \in I} \mathcal{G}_{\gamma_i} \ $ is again rectifiable, so we can evaluate its symplectic area.

Recall that the set of homology class of elements of $\mathcal{G}$ is countable. We propose the following.
\begin{conjecture}\label{conj}
Let 
\begin{itemize}[itemsep=2ex,leftmargin=0.5cm]
 \item $M$ be a closed, orientable, surface of genus $g$, equipped with a generic Finsler metric $m$
 \item $\mathcal{F}$ be the closure in $H_1(M,\R)$ of the set $ \{ t\left[ \delta \right] \co \delta \in \mathcal{G},\ t \in \left[0,1 \right] \}$
 \item $ \mathcal{G}_T $ be the set of elements of $\mathcal{G}$ with length $\leq T$
\item $H(T)$ the set of homology classes of elements of $\mathcal{G}(T)$
\item $N(T)$ the cardinality of $H(T)$,
 \end{itemize}
Then 
\begin{itemize}[itemsep=2ex,leftmargin=0.5cm]
\item $\mathcal{F}$ is rectifiable. We denote $ \Omega \left(\mathcal{F}\right)$ its symplectic area with respect to the intersection form $\inter(.,.)$.
\item we have the following asymptotic estimate :
$$
\lim_{T \rightarrow + \infty} \frac{N(T)}{T^2} = \Omega \left(\mathcal{F}\right).
$$
\end{itemize}
\end{conjecture}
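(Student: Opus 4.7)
The plan is to prove both parts of the conjecture simultaneously by carrying out Steps 1 and 2 from the introduction for each minimizing $(g-1)$-multicurve $\Gamma$, and then assembling $\mathcal{F}$ from the pieces $\mathcal{F}_\Gamma$ and transferring the quadratic growth estimate $N_\Gamma(T)/T^2 \to \mathrm{Vol}(\mathcal{E}(\Gamma))$ (already established in the paper) to $N(T)/T^2$. I will describe the idea in the genus two case; the general case proceeds analogously by working inside the symplectic reduction of $H_1(M,\R)$ associated to $\Gamma$.

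\emph{Steps 1 and 2 for each $\gamma$.} The key observation is that $\mathcal{F}_\gamma$ lies in $[\gamma]^\perp$, on which the intersection form has kernel $\langle [\gamma] \rangle$, so it descends to a genuine symplectic form on the two-dimensional quotient $[\gamma]^\perp / \langle [\gamma] \rangle$. The natural way to make sense of the symplectic area of $\mathcal{F}_\gamma$ is to integrate this descended form against the image of $\mathcal{F}_\gamma$ under the projection, weighted by the multiplicities coming from the section $\delta \mapsto [\delta]$ described in Proposition \ref{injection F Gamma}. A direct computation using the star-shapedness of $\mathcal{E}(\gamma)$ at $0$ should show that this integration recovers $\mathrm{Vol}(\mathcal{E}(\gamma))$, completing Steps 1 and 2 at once.

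\emph{Assembly and counting.} Every homologically minimal $\delta \in \mathcal{G}$ extends to a minimizing 2-multicurve by taking any edge of $\mathcal{B}_1$ emanating from $[\delta]/\|[\delta]\|$ (such an edge exists because $\mathcal{E}(\delta)$ is a neighborhood of $0$), so $\mathcal{F} = \bigcup_\gamma \mathcal{F}_\gamma$ after closures. Distinct overlaps $\mathcal{F}_{\gamma_1} \cap \mathcal{F}_{\gamma_2}$ sit inside common edges of $\mathcal{B}_1$, hence have one less dimension inside each $\mathcal{F}_{\gamma_i}$ and contribute zero symplectic area. This yields $\Omega(\mathcal{F}) = \sum_\gamma \mathrm{Vol}(\mathcal{E}(\gamma))$. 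For the counting, after fixing a canonical completion $\delta \mapsto \gamma(\delta)$ to avoid double counting (e.g.\ the shortest permissible $\gamma$, broken by a lexicographic rule) one partitions $\mathcal{G}$ into pieces indexed by $\gamma$ and writes
\begin{equation*}
\frac{N(T)}{T^2} \;=\; \sum_\gamma \frac{N_\gamma(T)}{T^2} \;\longrightarrow\; \sum_\gamma \mathrm{Vol}(\mathcal{E}(\gamma)) \;=\; \Omega(\mathcal{F}).
\end{equation*}

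The delicate point, which I expect to be the main obstacle, is the interchange of the limit $T \to \infty$ with the infinite sum over $\gamma$. Each $\sum_\gamma N_\gamma(T)$ is a finite sum for fixed $T$, but as $T \to \infty$ more and more $\gamma$'s contribute, so we need a uniform tail estimate
\begin{equation*}
\lim_{K \to \infty}\, \limsup_{T \to \infty}\, \frac{1}{T^2} \sum_{\gamma:\, \ell_m(\gamma) > K} N_\gamma(T) = 0
\end{equation*}
together with the matching summability $\sum_\gamma \mathrm{Vol}(\mathcal{E}(\gamma)) < \infty$. Both require controlling how $\mathcal{E}(\gamma)$ degenerates for long $\gamma$: intuitively, the vertex link at a long $\gamma$ occupies a small solid angle on $\mathcal{B}_1$, but quantifying this appears to lie beyond the a priori estimates of \cite{gafa, nonor} and is the place where finer geometric input (perhaps specific to negative curvature or the particular Finsler structure) will be needed. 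A secondary obstacle is to arrange the partition $\delta \mapsto \gamma(\delta)$ so that the selected $\mathcal{F}_\gamma$'s overlap on sets of genuine symplectic measure zero, and to verify that the closure operation in the definition of $\mathcal{F}$ does not add mass missed by $\bigcup_\gamma \mathcal{F}_\gamma$.
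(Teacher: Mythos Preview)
The statement you are attempting to prove is labeled \emph{Conjecture} in the paper, and indeed the paper does not prove it. What the paper establishes is (i) the quadratic growth of each individual $N_\Gamma(T)$ with limit $\mbox{Leb}\,\mathcal{P}_1(\Gamma)$, and (ii) the full conjecture in the very special case of generic ``giraffes,'' where $\mathcal{G}$ decomposes into finitely many pieces indexed by the genus-one complementary tori. There is therefore no proof in the paper to compare your proposal against; your sketch is an attempt to go beyond what the paper claims.

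Your outline correctly identifies the architecture such a proof would need, and you are candid that the decisive step --- the uniform tail estimate allowing the interchange of $T\to\infty$ with the infinite sum over $\gamma$ --- is missing. That gap is exactly why the authors state this as a conjecture; the text surrounding the conjecture even says the interesting problem is to find conditions guaranteeing $\Omega(\mathcal{F})<\infty$. So this is a genuine gap, not a detail.

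There is also a subtler mismatch in your counting argument. After you choose a partition $\delta\mapsto\gamma(\delta)$, the pieces you sum are \emph{not} the $N_\gamma(T)$ of the paper (which count all of $\mathcal{G}_\gamma$), but strictly smaller quantities counting only those $\delta$ assigned to $\gamma$. The paper's theorem gives $N_\gamma(T)/T^2\to \mbox{Leb}\,\mathcal{P}_1(\gamma)$ for the full $\mathcal{G}_\gamma$, and there is no reason the partitioned counts should have the same limits --- indeed, if they did, and if the overlaps really had zero area as you claim, you would be double-counting. Conversely, summing the full $N_\gamma(T)$ overcounts $N(T)$ massively, since a single $\delta$ lies in $\mathcal{G}_\gamma$ for infinitely many $\gamma$. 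Resolving this tension would require an inclusion--exclusion or disjointification argument that is compatible both with the lattice-point counting and with the area computation, and nothing in the paper supplies one. Finally, Steps 1 and 2 themselves (rectifiability of $\mathcal{F}_\gamma$ and the equality $\Omega(\gamma)=\mathrm{Vol}(\mathcal{E}(\gamma))$) are stated in the introduction as open steps the authors \emph{suggest}, not facts they prove; your ``direct computation \ldots\ should show'' does not close that gap either.
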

Of course, even if the conjecture is true, it doesn't say much if $\Omega \left(\mathcal{F}\right)$ is infinite. Therefore our aim is now to find sufficient conditions for $\Omega \left(\mathcal{F}\right)$ to be finite. One such condition is when $M$ has $g-1$ thin long necks. The question is, how far from this rather obvious hypothesis can we go?\\

\textbf{Acknowledgements.}

The first author was partially supported by the ANR grant``Hamilton-Jacobi et th\'eorie KAM faible" and the second author was supported by Swiss National Science Foundation grant number PP00P2$\_$128557. The authors thank Jean Saint Pierre for the statement of Proposition \ref{appendice}.
%%%%%%%%%%%
\section{Notations and preliminaries}
%%%%%%%%%%%
\subsection{Minimizing measures}
We denote by $\mathcal{M}$ the set of Borel measures on the unit tangent bundle $T^1 M$ of $(M,m)$, not necessarily normalized (i.e. the total mass need not be 1), which are invariant under the geodesic flow of $m$. 
If $\mu \in \mathcal{M}$, we denote 
$$
m(\mu) := \int_{T^1 M} 1 d \mu
$$
so $m(\mu)$ is both the total mass of $\mu$, and, when $\mu$ is supported by a closed geodesic $\alpha$, the length of $\alpha$, thus the notation $m(\mu)$ is quite convenient. 

By \cite{Mather91} (see also \cite{nonor}), for any $\mu$ in $\mathcal{M}$ and any $C^1$ function $f$ on $M$ we have
\begin{equation}\label{mesures fermees}
 \int_{T^1 M} df d \mu = 0.
 \end{equation}
Thus the measures in $\mathcal{M}$ have a well-defined homology class in $H_1(M,\R)$. For instance if $\mu$ is supported by a closed orbit of the geodesic flow $(\gamma, \dot\gamma)$, $\gamma$ a closed geodesic, then the homology class of $\mu$ is just that of $\gamma$. 

 We say a measure $\mu$ in $\mathcal{M}$ is minimizing when it is minimizing in its homology class $\left[\mu\right]$, i.e., when
$$
m(\mu) = \|\left[\mu\right]\|. 
$$
One of the reasons for considering invariant measures (rather than just geodesic multicurves) is that there may be homology classes in $H_1(M,\R)$ for which there is no minimizing multicurve. In fact, when $M$ is an orientable surface, such classes always exist but there still always exists a minimizing measure \cite{Mather91, nonor}. 

\begin{lemma}\label{lemme combinaisons lineaires}
Let $\mu, \nu \in \mathcal{M}$ be such that $\mu + \nu$ is minimizing in its homology class. Then for any $a,b \geq 0$, $a\mu + b\nu$ is minimizing in its homology class.
\end{lemma}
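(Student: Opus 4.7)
The plan is to show that the hypothesis $m(\mu+\nu) = \|[\mu+\nu]\|$ secretly contains three pieces of information: that $\mu$ is individually minimizing in $[\mu]$, that $\nu$ is individually minimizing in $[\nu]$, and that $[\mu]$ and $[\nu]$ saturate the triangle inequality for the stable norm. Once these are in hand, linearity of the mass functional $m$ on $\mathcal{M}$ together with a supporting-hyperplane argument on the unit ball of the stable norm will give the conclusion for arbitrary non-negative combinations.

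First I would record two elementary facts. The mass functional is linear, i.e.\ $m(a\mu+b\nu) = a\, m(\mu) + b\, m(\nu)$ for $a,b \geq 0$, and the assignment $\mu \mapsto [\mu]$ is also linear, so $[a\mu+b\nu] = a[\mu]+b[\nu]$. By the definition of the stable norm and its extension to $\mathcal{M}$ via \cite{Mather91, nonor}, one has $m(\sigma) \geq \|[\sigma]\|$ for every $\sigma \in \mathcal{M}$. Chaining these with the hypothesis and the triangle inequality yields
$$
m(\mu) + m(\nu) \;=\; m(\mu+\nu) \;=\; \|[\mu]+[\nu]\| \;\leq\; \|[\mu]\|+\|[\nu]\| \;\leq\; m(\mu)+m(\nu),
$$
forcing every inequality to be an equality. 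In particular $m(\mu) = \|[\mu]\|$, $m(\nu)=\|[\nu]\|$, and $\|[\mu]+[\nu]\| = \|[\mu]\|+\|[\nu]\|$.

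The key step is then to upgrade the equality $\|[\mu]+[\nu]\| = \|[\mu]\|+\|[\nu]\|$ into equalities $\|a[\mu]+b[\nu]\| = a\|[\mu]\|+b\|[\nu]\|$ for all $a,b\geq 0$. For this I would pick a linear functional $\phi$ on $H_1(M,\R)$ of dual norm one that supports the unit ball $\mathcal{B}_1$ at the point $([\mu]+[\nu])/\|[\mu]+[\nu]\|$, so $\phi([\mu]+[\nu]) = \|[\mu]+[\nu]\|$ while $\phi(x) \leq \|x\|$ everywhere. Combining $\phi([\mu]) \leq \|[\mu]\|$, $\phi([\nu]) \leq \|[\nu]\|$ with $\phi([\mu]+[\nu]) = \|[\mu]\|+\|[\nu]\|$ forces $\phi([\mu]) = \|[\mu]\|$ and $\phi([\nu]) = \|[\nu]\|$. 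Hence for any $a,b\geq 0$,
$$
a\|[\mu]\| + b\|[\nu]\| \;=\; \phi(a[\mu]+b[\nu]) \;\leq\; \|a[\mu]+b[\nu]\| \;\leq\; a\|[\mu]\| + b\|[\nu]\|,
$$
giving the desired equality.

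Putting everything together, for $a,b \geq 0$ we have
$$
m(a\mu+b\nu) \;=\; a\, m(\mu) + b\, m(\nu) \;=\; a\|[\mu]\| + b\|[\nu]\| \;=\; \|a[\mu]+b[\nu]\| \;=\; \|[a\mu+b\nu]\|,
$$
so $a\mu+b\nu$ is minimizing in its homology class. There is no genuine obstacle here; the only non-trivial input is the Hahn--Banach / supporting-hyperplane step, which in finite dimensions is a routine convexity fact. The small pitfall to watch is simply that one must extract both the individual minimality of $\mu$ and $\nu$ \emph{and} the alignment of $[\mu], [\nu]$ on a common face of $\mathcal{B}_1$ from the single equality $m(\mu+\nu) = \|[\mu+\nu]\|$.
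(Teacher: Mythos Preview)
Your proof is correct and follows essentially the same route as the paper: both hinge on choosing a supporting linear functional (cohomology class) at $[\mu+\nu]/\|[\mu+\nu]\|$, deducing that it also supports $[\mu]$ and $[\nu]$ individually, and then using linearity to handle $a[\mu]+b[\nu]$. Your presentation is arguably a bit cleaner in that you first isolate the three equalities $m(\mu)=\|[\mu]\|$, $m(\nu)=\|[\nu]\|$, $\|[\mu]+[\nu]\|=\|[\mu]\|+\|[\nu]\|$ before invoking Hahn--Banach, whereas the paper weaves these through the supporting-form computation; but the substance is the same.
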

\proof 
Take a supporting hyperplane at $\left[\mu+\nu\right]/\|\left[\mu+\nu\right]\|$ to the ball $\mathcal{B}_1$, that is, a cohomology class $c \in H^1(M,\R)$ such that 
\begin{eqnarray}
\langle c,h \rangle & \leq & \|h\| \ \forall h \in H_1 (M,\R)\label{c support} \\ 
\langle c, \left[\mu+\nu\right]\rangle & = & \|\left[\mu+\nu\right]\|.
\end{eqnarray}
For any closed $1$-form $\omega$ on $M$ such that $\left[\omega\right]=c$, we have, by Equation (\ref{mesures fermees}),
\begin{equation}
\langle c, \left[\mu+\nu\right]\rangle = \int_{T^1 M} \omega d (\mu+\nu)= \int_{T^1 M} \omega d \mu \ +\ \int_{T^1 M} \omega d \nu .
\end{equation}
Since $\mu + \nu$ is minimizing in its homology class, we have 
\begin{equation}
 \|\left[\mu+\nu\right]\| = m(\mu+\nu)= m(\mu)+m(\nu).
 \end{equation}
so 
\begin{equation}\label{lemme 2.1, egalite}
 \int_{T^1 M} \omega d \mu \ +\ \int_{T^1 M} \omega d \nu= m(\mu)+m(\nu).
 \end{equation}
On the other hand, by the inequality (\ref{c support}), we have 
\begin{eqnarray*}
\langle c, \left[\mu\right]\rangle & \leq & \|\left[\mu\right]\| \leq m(\mu) \\
\langle c, \left[\nu\right]\rangle & \leq & \|\left[\nu\right]\|\leq m(\nu).
\end{eqnarray*}
Those two inequalities sum up to the equality (\ref{lemme 2.1, egalite}), so both inequalities are equalities : 
\begin{eqnarray*}
 \int_{T^1 M} \omega d \mu &=& m(\mu) \\
 \int_{T^1 M} \omega d \mu &=& m(\nu).
\end{eqnarray*} 
Now take $a,b \geq 0$. We have 
\begin{eqnarray*}
\langle c, a \left[\mu\right] + b \left[\nu\right] \rangle &=& a \langle c, \left[\mu\right] \rangle + b \langle c, \left[\nu\right] \rangle \\
&=& a m(\mu) +b m(\nu) \\
&=& a \|\left[\mu\right]\| +b \|\left[\nu\right]\| \\
& \geq & \| a \left[\mu\right] + b\left[\nu\right]\|
\end{eqnarray*}
where the last inequality is just the triangle inequality. 
On the other hand, by inequality (\ref{c support}), we have 
$$
\langle c, a \left[\mu\right] + b \left[\nu\right] \rangle \leq \| a \left[\mu\right] + b\left[\nu\right]\|
$$
hence 
$$
a m(\mu) +b m(\nu)= \langle c, a \left[\mu\right] + b \left[\nu\right] \rangle = \| a \left[\mu\right] + b\left[\nu\right]\|
$$
which says that $a\mu + b\nu$ is minimizing in its homology class.
\qed
%%%%%%%%%%%%%%%%%%%%
\subsection{Faces of the stable norm}
We will need Proposition 5.6, Theorem 6.1, Proposition 5.4, and Lemma 5.5 of \cite{nonor} so we recall their statements below. 

We say a homology class $h$ in $H_1(M,\R)$ is
\begin{itemize}[itemsep=2ex,leftmargin=0.5cm]
 \item integer if $h \in H_1 (M,\Z) \subset H_1 (M,\R)$
 \item rational if there exists $h' \in H_1 (M,\Z)$ and $n \in \N^*$ such that $h'=n h$
 \item 1-irrational if there exists $h' \in H_1 (M,\Z)$ and $\lambda \in \R$ such that $h=\lambda h'$.\\
\end{itemize} 

\begin{proposition}\label{rational}
Let $M$ be a closed (possibly non-orientable) surface with a Finsler metric $m$.
If $h$ is a 1-irrational homology class and $\mu$ is an $h$-minimizing measure, then the support of $\mu$ consists of periodic orbits.\\
\end{proposition}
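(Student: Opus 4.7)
The plan is to use the supporting-hyperplane framework of Lemma~\ref{lemme combinaisons lineaires} to decompose $\mu$ ergodically, and then to exploit the surface-specific structure of the stable norm unit ball $\mathcal{B}_1$ — in particular the vertex characterization of $\mathcal{G}$ recalled in the introduction — to identify each ergodic component as concentrated on a single closed geodesic in $\mathcal{G}$.

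\textbf{Ergodic reduction.} After normalizing so that $m(\mu) = 1$, I would choose a supporting hyperplane $c \in H^1(M,\R)$ to $\mathcal{B}_1$ at $h/\|h\|$, represented by a closed $1$-form $\omega_c$, and let $F$ be the face of $\mathcal{B}_1$ supported by $c$, which contains $h/\|h\|$. Equation (\ref{mesures fermees}) and the $h$-minimality of $\mu$ yield $\int_{T^1 M} \omega_c\, d\mu = \langle c, \left[\mu\right]\rangle = \|h\| = m(\mu)$. Writing the ergodic decomposition $\mu = \int \mu_\omega\, dP(\omega)$ and applying the pointwise inequalities $\langle c, \left[\mu_\omega\right]\rangle \leq \|\left[\mu_\omega\right]\| \leq m(\mu_\omega)$ just as in the proof of Lemma~\ref{lemme combinaisons lineaires}, the aggregate equality forces $P$-a.e.\ equality: each ergodic $\mu_\omega$ is itself minimizing with $\left[\mu_\omega\right]/m(\mu_\omega) \in F$.

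\textbf{Identification via vertices.} Extreme points of the face $F$ are a fortiori vertices of $\mathcal{B}_1$, and by the vertex characterization of $\mathcal{G}$ from \cite{gafa, nonor} every such vertex is a $1$-irrational direction $\left[\alpha\right]/\|\left[\alpha\right]\|$ for some $\alpha \in \mathcal{G}$. Since an ergodic invariant probability measure is extreme in the convex set of normalized $c$-minimizing measures (any decomposition $\mu_\omega = t\mu_1 + (1-t)\mu_2$ into other such measures would, after restriction to flow-invariant probabilities, be a decomposition of $\mu_\omega$ itself, forcing $\mu_1 = \mu_2 = \mu_\omega$ by ergodicity), the rotation vector $\left[\mu_\omega\right]/m(\mu_\omega)$ must be a vertex of $F$, and surface-specific uniqueness of the minimizing measure at a vertex forces $\mu_\omega$ to be the normalized invariant measure on the corresponding closed geodesic $\alpha_\omega \in \mathcal{G}$. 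Hence $\spt(\mu) = \overline{\bigcup_\omega \spt(\mu_\omega)}$ is contained in $\bigcup_{\alpha \in \mathcal{G} \cap F} \alpha$, a union of periodic orbits.

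\textbf{Main obstacle.} The heavy lifting is done by two structural facts about surfaces: the vertex characterization of $\mathcal{G}$ (ensuring every extreme point of $F$ is realized by an honest closed geodesic) and the matching of ergodic minimizers to vertices of $F$, which requires surface-specific uniqueness of the minimizing measure supported over each vertex direction. The latter is the delicate dynamical step — in higher dimensions ergodic minimizing measures can have non-vertex rotation vectors and non-periodic support, but the two-dimensional rigidity of Mather-minimal orbits (two distinct globally minimizing geodesics on a closed surface meet at most once when lifted to the universal cover) rules this out in our setting. The non-orientable case additionally draws on the extended framework of \cite{nonor}.
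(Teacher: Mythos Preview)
First, note that the paper does not prove this statement at all: it is quoted verbatim from \cite{nonor} (Proposition 5.6 there), along with Theorem \ref{thm-gafa}, Proposition \ref{two-sided}, and Lemma \ref{asymptote_fermee}. So there is no ``paper's proof'' to compare against; any comparison is with the argument in \cite{nonor}.

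That said, your argument has a genuine gap at the step ``the rotation vector $\left[\mu_\omega\right]/m(\mu_\omega)$ must be a vertex of $F$''. Ergodicity makes $\mu_\omega$ extreme in the convex set of \emph{invariant probability measures}, but the rotation map $\mu\mapsto \left[\mu\right]$ is linear, and linear images of extreme points need not be extreme. Concretely, on a flat torus the entire unit sphere is a single face $F$, and an irrational linear flow gives an ergodic minimizing measure whose rotation vector sits in the interior of that face --- no periodic orbits anywhere. Your proof as written would apply equally well to such a $\mu$, since you never use the hypothesis that $h$ is $1$-irrational; that is a red flag.

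There is a second, related problem: even granting that $\left[\mu_\omega\right]/m(\mu_\omega)$ is an \emph{extreme point} of $F$ (hence of $\mathcal{B}_1$), that does not make it a \emph{vertex} in the sense used here (an exposed $0$-face, i.e.\ a point with an open set of supporting hyperplanes). The vertex characterization of $\mathcal{G}$ you invoke concerns vertices, not arbitrary extreme points, so the bridge to ``realized by some $\alpha\in\mathcal{G}$'' is missing. Finally, the ``surface-specific uniqueness of the minimizing measure at a vertex'' you appeal to is essentially the content of Proposition \ref{rational} itself (together with genericity, which is not assumed here), so invoking it is circular.

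The argument in \cite{nonor} instead exploits the $1$-irrationality directly: writing $h=\lambda h'$ with $h'$ integer, one uses that integer classes on surfaces are minimized by geodesic multicurves, and then shows (via the structure of minimizing laminations on surfaces, essentially that support of a minimizing measure cannot cross the minimizing multicurve) that any $h$-minimizing measure must be supported on closed geodesics. The $1$-irrationality is what makes the integer-class machinery available.
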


\begin{theorem}\label{thm-gafa}
Let \begin{itemize}[itemsep=2ex,leftmargin=0.5cm]
 \item $M$ be a closed orientable surface endowed with a Finsler metric $m$,
 \item $\gamma_1, \ldots \gamma_{k}$ be simple closed geodesics such that the formal sum $\Gamma := \gamma_1 + \ldots + \gamma_{k}$ is a minimizing multicurve,
 \item $h_0$ be $\left[\gamma_1\right]+ \ldots +\left[\gamma_k \right]$.
\end{itemize}
 For all $h \in V(\Gamma)^\perp$, there exists $s(h_0,h)>0$ such that the subset of the unit sphere $\partial \mathcal{B}_1$
$$
 \left\{\frac{ h_0+ sh}{|| h_0+sh||} \, \co s \in \left[0,s(h_0,h)\right] \right\}
$$
is a straight segment.\\
\end{theorem}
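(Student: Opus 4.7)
My plan is to reformulate the segment claim via calibrating $1$-forms and then apply Lemma \ref{lemme combinaisons lineaires}. Observing that $\{(h_0+sh)/\|h_0+sh\| : s\in[0,s(h_0,h)]\}$ being a straight segment on $\partial\mathcal{B}_1$ is equivalent to $s\mapsto \|h_0+sh\|$ being affine on $[0,s(h_0,h)]$, it suffices to produce a single closed $1$-form $\omega$ on $M$ whose cohomology class $[\omega]$ supports $\mathcal{B}_1$ simultaneously at $h_0/\|h_0\|$ and at $(h_0+s_0h)/\|h_0+s_0h\|$ for some $s_0>0$; convexity of the face $F([\omega])$ then propagates the calibration across the whole segment in between.

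By continuity of the stable norm and density of 1-irrational directions in $V(\Gamma)^\perp$, I first reduce to the case $h=\lambda h'$ with $h'\in H_1(M,\Z)$. Proposition \ref{rational} then supplies an $h'$-minimizing measure supported on closed geodesics, hence a minimizing multicurve $\Delta=\delta_1+\cdots+\delta_l$ whose homology class $[\Delta]$ is a positive multiple of $h$. The sum $\mu_\Gamma+\mu_\Delta$ has homology class $h_0+[\Delta]$ and total mass $\|h_0\|+\|[\Delta]\|$, so once $\mu_\Gamma+\mu_\Delta$ is shown to be minimizing in its homology class, Lemma \ref{lemme combinaisons lineaires} applied to $(\mu_\Gamma,\mu_\Delta)$ yields that $a\mu_\Gamma+b\mu_\Delta$ is minimizing for all $a,b\geq 0$, and consequently $\|ah_0+b[\Delta]\|=a\|h_0\|+b\|[\Delta]\|$, which furnishes the affine dependence needed for the segment.

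The crux is therefore to establish $\|h_0+[\Delta]\|=\|h_0\|+\|[\Delta]\|$, which I plan to do by constructing a closed $1$-form $\omega$ on $M$ calibrating both $\Gamma$ and $\Delta$. The hypothesis $h\in V(\Gamma)^\perp$ gives $\mathrm{Int}([\Gamma],[\Delta])=0$, and on the oriented surface $M$ this is exactly the cohomological condition needed to build such an $\omega$: cutting $M$ along $\Gamma\cup\Delta$, one chooses calibrating $1$-forms separately on the pieces and glues them across the boundary circles, the vanishing algebraic intersection being precisely what allows the matching conditions on fluxes around the circles to be solved. The principal obstacle is this gluing construction: controlling simultaneously the cohomology class of $\omega$ on each piece of $M\setminus(\Gamma\cup\Delta)$ and ensuring that it satisfies the supporting-hyperplane inequality against every other class in $H_1(M,\R)$ requires a careful topological analysis and constitutes the technical heart of the proof.
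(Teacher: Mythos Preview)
First, note that the paper does not actually prove Theorem~\ref{thm-gafa}: it is quoted from \cite{gafa,nonor} as background, so there is no ``paper's own proof'' to compare against. I will therefore evaluate your strategy on its own terms.

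Your plan has a genuine gap at the step you call the crux. You aim to show that for the $h'$-minimizing multicurve $\Delta$ one has $\|h_0+[\Delta]\|=\|h_0\|+\|[\Delta]\|$, i.e.\ that $\mu_\Gamma+\mu_\Delta$ is minimizing. This is simply false in general. Take $h=-[\gamma_1]\in V(\Gamma)^\perp$ (it lies there because the $\gamma_i$ are pairwise disjoint). The $h$-minimizer is $\Delta=-\gamma_1$, so $\mu_\Gamma+\mu_\Delta$ has mass $2\ell_m(\gamma_1)+\sum_{i\geq 2}\ell_m(\gamma_i)$ while its homology class is $\sum_{i\geq 2}[\gamma_i]$, whose stable norm is at most $\sum_{i\geq 2}\ell_m(\gamma_i)$. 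Hence $\mu_\Gamma+\mu_\Delta$ is \emph{not} minimizing, and no calibrating $1$-form can exist for it. The theorem only asserts a \emph{short} straight segment emanating from $h_0/\|h_0\|$; it does not assert that this segment reaches $[\Delta]/\|[\Delta]\|$, and in general it does not. Your proposed gluing of calibrating forms along $\Gamma\cup\Delta$ also presupposes that $\Gamma$ and $\Delta$ are disjoint, but $\inter([\gamma_i],[\Delta])=0$ is an algebraic condition and does not prevent geometric intersections, so the cut-and-paste picture is not well posed either.

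There is a second, independent problem with the reduction step. Knowing that $s(h_0,h)>0$ exists for a dense set of rational directions $h$ does not yield the same for irrational $h$ by continuity of the norm: nothing prevents $s(h_0,h_n)\to 0$ along a sequence $h_n\to h$. You would need a uniform lower bound on the segment length, which is essentially as hard as the theorem itself.

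The proof in \cite{gafa,nonor} proceeds differently: rather than calibrating $\Gamma$ together with an externally chosen $\Delta$, one studies minimizing measures for the classes $h_0+sh$ with $s>0$ small and shows, using Proposition~\ref{two-sided}, Lemma~\ref{asymptote_fermee}, the Graph Property, and the topology of the complement $M\setminus\Gamma$, that such measures must contain $(\Gamma,\dot\Gamma)$ in their support. This forces a common supporting cohomology class for $h_0$ and $h_0+sh$, hence the flat segment. The point is that the ``partner'' curve is produced by the argument (and is automatically disjoint from $\Gamma$), not prescribed in advance as an arbitrary $h$-minimizer.
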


\begin{proposition}\label{two-sided}
Let $\gamma_2$ be a closed, simple, two-sided geodesic on a closed (possibly non-orientable) surface $M$ endowed with a Finsler metric $m$. There exists a neighborhood $V_2$ of $\left(\gamma_2,\dot\gamma_2\right)$ in $T^{1}M$ such that if $\gamma$ is a simple closed geodesics entering $V_2$ (resp. leaving $V_2$) then
\begin{itemize}[itemsep=2ex,leftmargin=0.5cm]
 \item
 either $\gamma$ is a closed geodesic homotopic to $\gamma_2$
 \item
or $\gamma$ is asymptotic to a closed geodesic homotopic to $\gamma_2$
 \item
 or $\gamma$ intersects $\gamma_2$, and all intersections have the same sign with respect to some orientation of $p (V_2)$, where $p$ denotes the canonical projection $TM \rightarrow M$.\\
 \end{itemize}
\end{proposition}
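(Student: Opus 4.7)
The plan is to construct $V_2$ as a thin tubular neighborhood of the closed orbit $(\gamma_2,\dot\gamma_2)$ in $T^1M$, use the two-sidedness of $\gamma_2$ to endow its annular neighborhood in $M$ with an orientation, and then classify simple closed geodesics entering $V_2$ according to how their arcs sit inside this annulus.

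First I would use two-sidedness to fix a tubular neighborhood $U \cong \gamma_2 \times (-\epsilon,\epsilon)$ of $\gamma_2$ in $M$, with coordinates $(s,t)$ so that $\gamma_2 = \{t=0\}$ and $\dot\gamma_2 = \partial_s$, and a chosen orientation (which induces a coorientation of $\gamma_2$). I would then define $V_2 \subset T^1M$ as the set of unit tangent vectors $(x,v)$ with $x \in U$ and the angle between $v$ and $\partial_s$ at $x$ below a small threshold $\theta_0$. By continuous dependence of the geodesic flow on initial conditions, taking $\epsilon$ and $\theta_0$ small enough ensures that any geodesic arc $\alpha \subset U$ whose velocity lift lies entirely in $V_2$ is, when lifted to the universal cover $\tilde U = \mathbb R \times (-\epsilon,\epsilon)$ of $U$, a graph over the $s$-axis.

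Now let $\gamma$ be a simple closed geodesic entering $V_2$. If the entire orbit $(\gamma,\dot\gamma)$ is contained in $V_2$, then $\gamma$ is a simple closed curve in the annulus $U$ with nowhere vertical tangent, hence homotopic in $U$ to $\gamma_2$, giving case (i). Otherwise I would decompose $\gamma \cap V_2$ into its connected arcs, and for each maximal such arc $\alpha$ consider its lift $\tilde\alpha$ in $\tilde U$: $\tilde\alpha$ is a graph over $s$ with both endpoints on $\partial \tilde U$. Either $\tilde\alpha$ crosses the lift $\{t=0\}$ of $\gamma_2$, producing an intersection and giving case (iii), or both endpoints lie on the same boundary component of $\tilde U$. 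To handle the latter case I would study the local Poincaré first-return map $P$ on a transverse section through $(\gamma_2(0),\dot\gamma_2(0))$: the orbit of $\gamma$ meets the section at points close to the fixed point of $P$, and depending on whether the $P$-iterates converge to the fixed point, stay on an invariant circle, or leave a prescribed neighborhood, the arc $\alpha$ either forces $\gamma$ to be asymptotic to a nearby closed geodesic homotopic to $\gamma_2$ (case (ii)), forces $\gamma$ itself to be such a closed geodesic (case (i)), or eventually produces an arc of the previous type which does cross $\{t=0\}$.

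For the sign statement, I would note that along any arc $\alpha$ entering $V_2$ the $\partial_s$-component of $\dot\gamma$ is positive (since $\dot\gamma$ is close to $+\partial_s$), so each crossing of $\tilde\alpha$ with $\{t=0\}$ has its sign determined solely by the sign of the $\partial_t$-component of $\dot\gamma$ at the crossing, and the orientation of $U$ makes all these signs coincide. The main obstacle I expect is a careful treatment of case (ii): a \emph{closed} geodesic cannot be strictly forward-asymptotic to a distinct periodic orbit, so this conclusion has to be interpreted via the dynamics of $P$ near its fixed point, and the analysis is genuinely delicate when $P$ has eigenvalues on the unit circle (elliptic or parabolic case), where one must account for invariant circles of $P$ and the possibility of a whole family of nearby closed geodesics homotopic to $\gamma_2$. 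Making the choice of $V_2$ uniform so that the graph-over-$s$ property holds for every simple closed geodesic entering it, simultaneously with the Poincaré-map analysis, is the technical heart of the argument.
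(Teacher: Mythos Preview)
The paper does not prove this proposition: it is one of several results quoted from \cite{nonor} (it is Proposition~5.4 there), so there is no in-paper argument to compare your sketch against.

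That said, two remarks on your outline. First, your justification of the sign claim is incomplete. You correctly reduce the sign of each crossing to the sign of the $\partial_t$-component of $\dot\gamma$ (the $\partial_s$-component being positive), but the final clause ``the orientation of $U$ makes all these signs coincide'' is an assertion, not an argument: fixing an orientation of the annulus tells you what ``positive'' means, it does not by itself rule out one arc of $\gamma\cap U$ crossing the core from below and another from above. What forces same sign is a further step combining simplicity of $\gamma$ with the fact that, for $V_2$ small enough, every maximal arc of $\gamma$ in $V_2$ wraps around the annulus at least once (continuous dependence of the flow on initial data), so two disjoint such arcs which are graphs over $s$ and go from one boundary of $U$ to the other must do so with the same orientation, by an intermediate-value argument on their lifts.

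Second, you are right that case~(ii) is vacuous for a genuinely closed $\gamma$. In the paper the proposition is applied (see the proof of Lemma~\ref{lemme nu}) to geodesics lying in the support of a minimizing measure, which are typically not closed; the wording ``simple closed geodesic'' in the quoted statement is almost certainly a transcription slip from \cite{nonor}, and the alternative~(ii) is meant to cover non-periodic orbits spiralling towards a closed geodesic homotopic to $\gamma_2$. Once $\gamma$ is allowed to be non-closed, your Poincar\'e-section dichotomy is the natural mechanism, and the elliptic/parabolic subtleties you anticipate are exactly what the cited reference addresses.
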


\begin{lemma}\label{asymptote_fermee}
Let $M$ be a closed (possibly non-orientable) surface with a Finsler metric.
If a geodesic $\gamma$ is asymptotic to a simple closed geodesic, then $(\gamma,\dot{\gamma})$ is not in the support of any minimizing measure.
\end{lemma}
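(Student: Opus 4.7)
I argue by contradiction. Suppose that a minimizing measure $\mu$ has $(\gamma,\dot\gamma)$ in its support. Since $\mathrm{supp}(\mu)$ is closed and invariant under the geodesic flow, it contains the whole orbit $\mathcal{O}_\gamma$ of $(\gamma,\dot\gamma)$ together with its $\omega$-limit set; by the asymptoticity hypothesis the latter includes the periodic orbit $\mathcal{O}_0 := \{(\gamma_0(s),\dot\gamma_0(s)) \co s \in \R\}$. Moreover $\mathcal{O}_\gamma$ is not periodic: were $\gamma$ a periodic geodesic distinct from $\gamma_0$, then $d(\gamma(t),\gamma_0)$ would be a periodic function of $t$, which precludes $d(\gamma(t),\gamma_0)\to 0$.

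The main claim is that $[\mu]$ is a positive real multiple of $[\gamma_0]$; granted this, Proposition \ref{rational} forces $\mathrm{supp}(\mu)$ to consist entirely of periodic orbits, contradicting the presence of the non-periodic orbit $\mathcal{O}_\gamma$ in it. To prove the claim, pick a cohomology class $c \in H^1(M,\R)$ that supports the unit ball $\mathcal{B}_1$ at $[\mu]/\|[\mu]\|$, represented by a closed $1$-form $\omega_c$. From
\begin{equation*}
\int \omega_c \, d\mu \,=\, \langle c,[\mu]\rangle \,=\, \|[\mu]\| \,=\, m(\mu) \,=\, \int 1 \, d\mu
\end{equation*}
and $\omega_c(v) \leq 1$ on $T^1M$, we get $\omega_c \equiv 1$ on $\mathrm{supp}(\mu)$; in particular $\omega_c(\dot\gamma_0)\equiv 1$, so $\int_{\gamma_0}\omega_c = \ell_m(\gamma_0) = \|[\gamma_0]\|$ and $c$ also supports $[\gamma_0]$.

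Now take any closed $1$-form $\eta$ with $\int_{\gamma_0}\eta = 0$. Lifting to the universal cover $\tilde M$, $\eta = dF$ with $F$ invariant under the deck transformation $T_0$ associated with $\gamma_0$ (because $\langle[\eta],[\gamma_0]\rangle = 0$), hence $F$ descends to the cylinder $\tilde M/\langle T_0\rangle$. On that cylinder the forward image of $\tilde\gamma$ stays in a compact neighborhood of the closed curve projection of $\tilde\gamma_0$, so $F\circ\tilde\gamma$ is bounded on $[0,\infty)$ and
\begin{equation*}
\int_{\gamma|_{[0,T]}}\eta \,=\, F(\tilde\gamma(T)) - F(\tilde\gamma(0)) \,=\, O(1) \qquad \text{as } T \to +\infty.
\end{equation*}
Passing this pointwise bound to $\int\eta\,d\mu = 0$ (see below), and using that this then yields $\langle[\eta],[\mu]\rangle = 0$ for every $[\eta] \in [\gamma_0]^\perp \subset H^1(M,\R)$, one concludes $[\mu]\in \R\cdot[\gamma_0]$, as required.

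The main obstacle is precisely that last transfer $\int_{\gamma|_{[0,T]}}\eta = O(1) \Rightarrow \int \eta\,d\mu = 0$: the boundedness is established along the single, possibly non-$\mu$-generic, orbit of $\gamma$, whereas the measure-theoretic integral is governed by the Birkhoff averages of $\mu$-almost every orbit. Bridging this gap is the technical heart of the argument: after an ergodic decomposition of $\mu$, one shows that the forward $\omega$-limit of $\mu$-almost every orbit in $\mathrm{supp}(\mu)$ is contained in $\mathcal{O}_0$, so that the same cylinder-trapping bound applies $\mu$-a.e. This structural fact about minimizing measures on surfaces is carried out in detail in \cite{nonor}.
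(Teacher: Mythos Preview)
The paper does not give its own proof of this lemma: it is quoted verbatim from \cite{nonor} (Lemma~5.5 there), alongside Proposition~\ref{rational}, Theorem~\ref{thm-gafa} and Proposition~\ref{two-sided}, all used as black boxes. So there is nothing in the paper to compare your argument against; I can only assess it on its own merits.

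Your strategy hinges on the claim that $[\mu]$ is a real multiple of $[\gamma_0]$, and this claim is not in general true under your hypotheses. You only assume that $\mathrm{supp}(\mu)$ \emph{contains} the orbit $\mathcal{O}_\gamma$; nothing prevents $\mu$ from also having an ergodic component supported on an unrelated minimizing closed geodesic $\alpha$ disjoint from $\gamma_0$, in which case $[\mu]$ certainly does not lie on the line $\R[\gamma_0]$. Your proposed rescue --- that ``the forward $\omega$-limit of $\mu$-almost every orbit in $\mathrm{supp}(\mu)$ is contained in $\mathcal{O}_0$'' --- is exactly what fails here: orbits in that $\alpha$-component have $\omega$-limit equal to $\alpha$, not $\mathcal{O}_0$. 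So the route to Proposition~\ref{rational} via $1$-irrationality of $[\mu]$ is blocked.

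Even setting that aside, the transfer from $\int_{\gamma|_{[0,T]}}\eta = O(1)$ to $\int\eta\,d\mu = 0$ is not legitimate: the orbit $\mathcal{O}_\gamma$ is non-recurrent (it spirals into $\gamma_0$ and never returns), so by Poincar\'e recurrence it has $\mu$-measure zero and tells you nothing about Birkhoff averages for $\mu$. Deferring this step to \cite{nonor} is circular, since that reference is precisely the source of the lemma you are attempting to prove. A minor additional gap: the inequality $\omega_c(v)\le 1$ on $T^1M$ presupposes a representative of $c$ with comass at most one, which need not exist among smooth closed $1$-forms; this is fixable but should be addressed.

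The argument in \cite{nonor} is local rather than homological: one exploits that orbits in the support of a minimizing measure lie in the Mather set and hence satisfy a recurrence/static property, which a geodesic spiraling into a closed geodesic visibly violates. No global statement about $[\mu]$ enters.
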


%%%%%%%%%%%%%
\subsection{Genericity}\label{sec:genericity}
\begin{definition}\label{def:genericity}
We say $(M,m)$ is generic if, for every $h \in H_1(M,\Z)$, there exists a unique minimizing multicurve in the homology class $h$.
\end{definition}
We warn the reader that by using the word "generic" we are abusing terminology a little bit. Indeed by \cite{Mane}, the property of having only one minimizing measure for each integer homology class is residual in the set of Tonelli Lagrangians. However, this is still a much larger set than the set of all Finsler metrics. In the smaller set of hyperbolic surfaces,  this property is true for a Baire dense set of surfaces. (We are considering the usual topology on the moduli space of hyperbolic metrics on a surface of fixed genus.)

\begin{proposition}
The set of hyperbolic surfaces without unique minimizing multicurves for every homology class is Baire meagre. 
\end{proposition}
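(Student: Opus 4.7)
The plan is to work in Teichmüller space $\mathcal{T}_g$ and exhibit the set of bad surfaces as a countable union of closed, nowhere-dense subsets. Since the property of having two distinct minimizing multicurves in some integer homology class is invariant under the mapping class group, meagreness descends to moduli space via the orbifold covering $\mathcal{T}_g \to \mathcal{M}_g$.

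First I would enumerate the countably many pairs $(c_1, c_2)$ of distinct weighted simple multicurves on $M$ with $[c_1] = [c_2]$ in $H_1(M,\Z)$, and for each such pair set
$$
E_{c_1,c_2} \;=\; \bigl\{ X \in \mathcal{T}_g \co \ell_X(c_1) = \ell_X(c_2) \bigr\}.
$$
Any surface carrying two distinct minimizing multicurves for some common integer homology class must in particular lie in some $E_{c_1,c_2}$, so the bad set in $\mathcal{T}_g$ is contained in $\bigcup_{(c_1,c_2)} E_{c_1,c_2}$.

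Each $E_{c_1,c_2}$ is closed by continuity of the hyperbolic length function, and is nowhere dense provided $\ell_{\cdot}(c_1) - \ell_{\cdot}(c_2)$ is not identically zero on $\mathcal{T}_g$. Indeed, this difference is real-analytic on $\mathcal{T}_g$ (Wolpert, Kerckhoff), so if it is not the zero function its zero locus is a proper real-analytic subvariety, hence closed and nowhere dense.

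The crux is therefore the nonvanishing claim: if $c_1 \ne c_2$ are distinct weighted simple multicurves, then $\ell_{\cdot}(c_1) \not\equiv \ell_{\cdot}(c_2)$ on $\mathcal{T}_g$. This follows from Bonahon's rigidity for the length pairing on the space of measured geodesic laminations (geodesic currents), which in particular separates distinct weighted simple multicurves. More elementarily, one writes the difference as a nontrivial $\Z$-linear combination $\sum a_i \ell_{\cdot}(\alpha_i)$ of length functions of the finitely many distinct simple closed curves $\alpha_i$ appearing in $c_1$ or $c_2$, and detects the nonvanishing by a pinching argument: pinching an $\alpha_{i_0}$ with $a_{i_0} \ne 0$ drives $\ell(\alpha_{i_0}) \to 0$ and forces a logarithmic blow-up of $\ell(\alpha_i)$ exactly for the $\alpha_i$ transverse to $\alpha_{i_0}$, so the asymptotic behaviour of $\sum a_i \ell_{\cdot}(\alpha_i)$ along well-chosen pinching sequences cannot vanish identically. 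This nonvanishing step is the main obstacle; once it is in place, $\bigcup E_{c_1,c_2}$ is a countable union of closed nowhere-dense sets, hence meagre by the Baire category theorem, and the descent to $\mathcal{M}_g$ completes the proof.
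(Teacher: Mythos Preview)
Your argument is essentially the same as the paper's: enumerate the countable collection of pairs of (integer-weighted) multicurves, use analyticity of length functions on Teichm\"uller space to conclude that each equality locus $E_{c_1,c_2}$ is a closed nowhere-dense set once the difference is not identically zero, and apply Baire. The paper refers to \cite{McShane-Parlier} for the nonvanishing step where you invoke Bonahon or a pinching argument, and it leaves the passage from Teichm\"uller space to moduli space implicit, but the strategy is the same.
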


\begin{proof}
The ingredients for proving this can be found in \cite[Lemma 4.1]{McShane-Parlier} so we only outline the argument here as the details are identical. 

There are  countably many topological types of integer weighted multicurves on a finite type hyperbolic surface. Any given hyperbolic metric will have a unique geodesic representative for a given topological type of multicurve, so to each type one associates a function over {\it Teichm\"uller space}, the space of {\it marked} hyperbolic metrics of given type. Now given two such multicurves, we consider the subspace of Teichm\"uller space where the two geodesic representatives are of the same length. This subspace is the zero set for the difference of the length functions and length functions of this type are analytic with respect to the analytic structure of Teichm\"uller space. If the zero set of this difference function was contained in an open subset, it would be all of Teichm\"uller space but it is not difficult to prove that this is only possible if the topological types of the multicurves were identical to begin with. Furthermore, as these sets are zero sets for analytic functions, they are closed subspaces. One now concludes by applying Baire's category theorem to the countable collection of these subspaces. 
\end{proof}

Note that when a surface is generic, any minimizing multicurve has at most $g:= \mbox{genus}(M)$ connected components. All the faces of $\mathcal{B}_1$ are simplices. 

%%%%%%%%%%%%%%%%%%%%%%%%%%%%%%%%%%%%%
\subsection{Long thin necks and quadratic growth}\label{sec:longthicknecks}

We briefly describe a situation where we can guarantee quadratic growth. We begin with a lemma which says that minimizing geodesics do not cross separating  geodesics with a sufficiently wide collar around them.

\begin{lemma}\label{lem:longthinnecks}
Let $\gamma$ be a separating closed geodesic. Assume there exists $r >0$ such that the
set
$$
V_r (\gamma) = \{ x \in M \co d(x,\gamma) \leq r \}
$$
is homeomorphic to an annulus, and its boundary components are $C^1$ closed curves $\gamma_1$ and $\gamma_2$, such that for $i=1,2$ we have $2 d ( \gamma_i, \gamma) \geq  l(\gamma_i)$.

Then, for any  homologically minimal closed geodesic $\alpha$, we have $\alpha \cap \gamma = \emptyset$.
\end{lemma}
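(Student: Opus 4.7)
The plan is a surgery-by-contradiction. Assume $\alpha\cap\gamma\neq\emptyset$. Since $\gamma$ is separating, $\alpha$ crosses $\gamma$ transversally in an even (positive) number of points. I will produce a piecewise geodesic $1$-cycle in $M$ homologous to $\alpha$ of length strictly less than $\ell_m(\alpha)$; since any such cycle bounds $\|[\alpha]\|$ from above, this contradicts the homological minimality $\|[\alpha]\|=\ell_m(\alpha)$.

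Decompose $\alpha\cap V_r(\gamma)$ into its finitely many maximal arcs, each with both endpoints on $\gamma_1\cup\gamma_2$. Three length facts, all coming from $d(\gamma_i,\gamma)\geq r$ together with the hypothesis $2d(\gamma_i,\gamma)\geq\ell_m(\gamma_i)$, drive the argument: (i) a $\gamma$-crossing arc with both endpoints on a single $\gamma_i$ has length $\geq 2r$, since it runs from $\gamma_i$ to $\gamma$ and back; (ii) an arc from $\gamma_1$ to $\gamma_2$ has length $\geq 2r$, since it must cross $\gamma$ and both boundary components sit at distance $\geq r$ from $\gamma$; (iii) the shorter sub-arc of $\gamma_i$ between any two of its points has length $\leq r$. \emph{Case A}: some arc $A$ of $\alpha\cap V_r(\gamma)$ has both endpoints on a single $\gamma_i$ and crosses $\gamma$; replacing $A$ by the shorter sub-arc of $\gamma_i$ with the same endpoints saves at least $r$ in length. \emph{Case B}: every $\gamma$-crossing arc joins $\gamma_1$ to $\gamma_2$, and such arcs alternate type-$(1,2)$ and type-$(2,1)$ along $\alpha$; pick a consecutive pair $\beta_{12},\beta_{21}$ with endpoints $q_0,q_3\in\gamma_1$ and $q_1,q_2\in\gamma_2$, excise both, and reconnect the two leftover pieces of $\alpha$ using the shorter sub-arc of $\gamma_1$ joining $q_0,q_3$ and the shorter sub-arc of $\gamma_2$ joining $q_1,q_2$; this yields a $2$-component piecewise geodesic multicurve of total length $\leq\ell_m(\alpha)-2r$.

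In either case the new cycle differs from $\alpha$ by a $1$-cycle supported in the annulus $V_r(\gamma)$, hence a multiple of $[\gamma]$ in $H_1(V_r(\gamma))\cong\Z$; since $\gamma$ is separating, $[\gamma]=0$ in $H_1(M)$, so the new cycle represents $[\alpha]$ and gives the desired contradiction. The main subtlety I expect is in Case B: one has to verify that the chosen pairing (across $\gamma_1$ on one side and across $\gamma_2$ on the other) is precisely what reassembles the leftover arcs of $\alpha$ into a closed multicurve rather than a chain with boundary. Everything else is a routine mix of the triangle inequality and the separating-curve condition $[\gamma]=0$.
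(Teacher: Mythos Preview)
Your argument is correct and follows essentially the same surgery-by-contradiction as the paper: the paper's ``First case'' is your Case~A and its ``Second case'' is your Case~B, with the same replacements along $\gamma_1$ and $\gamma_2$ and the same use of $[\gamma]=0$ for the homology bookkeeping. If anything you are slightly more careful than the paper on two points: your length estimates give a definite saving of at least $r>0$ (so strictness is automatic), and you explicitly pick a \emph{consecutive} pair of crossing arcs in Case~B and note the alternation, which is exactly what guarantees the reconnected object is a closed multicurve---the paper leaves this implicit.
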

\proof
Let $\alpha$ be a homologically minimal closed geodesic, and assume $\alpha \cap \gamma \neq \emptyset$. Let $a$ be an arc of $\alpha$ contained in $V_r (\gamma)$, intersecting $\gamma$,  with endpoints $x_a$ and $y_a$ on the boundary of $V_r (\gamma)$.

{\it First case:} Assume $x_a$ and $y_a$ are on the same boundary component (say, $\gamma_1$) of $V_r (\gamma)$.  Then we have 
$$
l(a) \geq 2 d(\gamma_1,\gamma) \geq  l(\gamma_1)
$$
so, replacing $a$ with an arc of $\gamma_1$ going from $x_a$ to $y_a$, we obtain a closed curve homologous to $\alpha$, and shorter, which contradicts the minimality of $\alpha$. 

{\it Second case:} Assume $x_a$ and $y_a$ are on different  boundary components of $V_r (\gamma)$ (say, $x_a$ on $\gamma_1$ and $y_a$ on $\gamma_2$). Then the algebraic intersection of $a$ and $\gamma$ is not zero. Since $\gamma$ is separating and $\alpha$ is a closed curve, the algebraic intersection of $\alpha$ and $\gamma$ is zero, so there exists another arc $b$ of $\gamma$, intersecting $\gamma$, with endpoints $x_b$ and $y_b$ on $\gamma_2$ and $\gamma_1$, respectively. Then we have 
$$
l(a)+l(b) \geq   2 \left( d(\gamma_1,\gamma)+ d(\gamma_2,\gamma)\right) \geq  l(\gamma_1)+ l(\gamma_2)
$$
so, replacing $a$ and $b$ with  arcs of $\gamma_1$ and $\gamma_2$ going from $x_a$ to $y_b$ and  from $x_b$ to $y_a$, respectively, we obtain a multicurve homologous to $\alpha$, and shorter, which contradicts the minimality of $\alpha$.

\begin{figure}[h]
%\ShowGrid
\leavevmode \SetLabels
\L(.105*.93) $x_a$\\
\L(.18*.93) $y_a$\\
\L(.143*.76) $a$\\
\L(.315*.93) $x_a$\\
\L(.39*.93) $y_a$\\
\L(.61*.93) $x_a$\\
\L(.685*.93) $y_b$\\
\L(.605*.79) $a$\\
\L(.665*.79) $b$\\
\L(.635*.07) $y_a$\\
\L(.685*.07) $x_b$\\
\L(.81*.93) $x_a$\\
\L(.885*.93) $y_b$\\
\L(.84*.07) $y_a$\\
\L(.90*.07) $x_b$\\
\endSetLabels
\begin{center}
\AffixLabels{\centerline{\epsfig{file =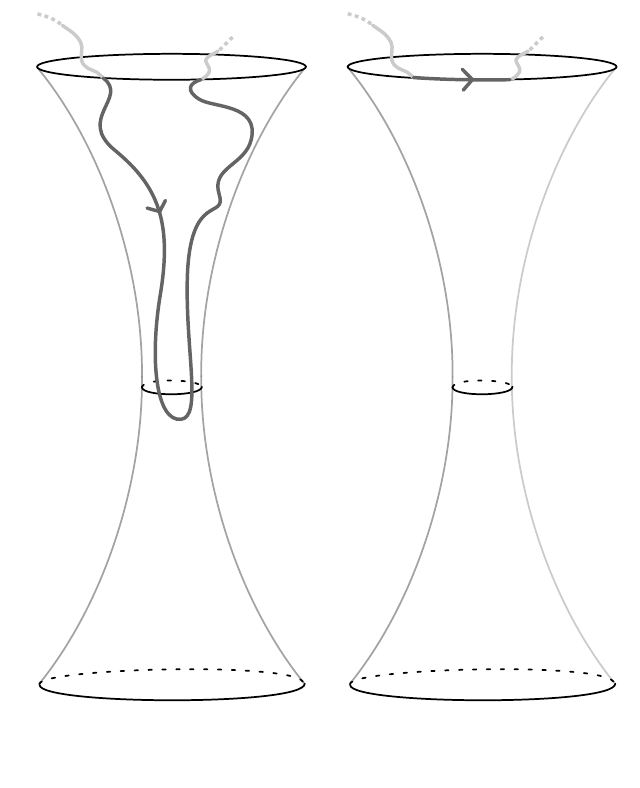,width=6.5cm,angle=0} \hspace{0.7cm} \epsfig{file =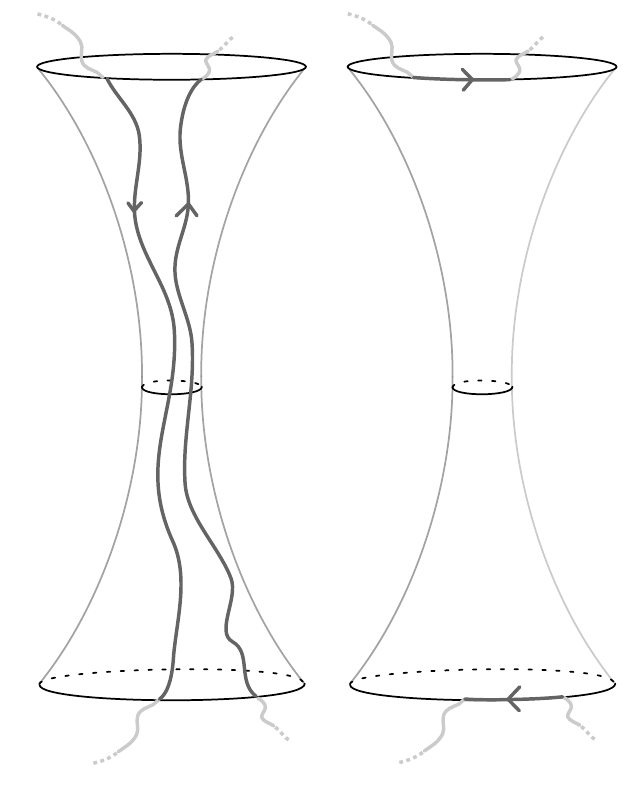,width=6.5cm,angle=0}}}
\vspace{-18pt}
\end{center}
\caption{First case on left and second case on right} \label{fig:cases}
\end{figure}

\qed
 
 \begin{definition} 
We say a separating closed geodesic  has a long thin neck if it satisfies the hypothesis of  lemma \ref{lem:longthinnecks} above.
\end{definition}

On a given surface we consider the set of curves with long thin necks as above. Observe that two such curves are necessarily disjoint thus this set of curves forms a multicurve. 

\begin{definition}
We say a Finsler surface $(M,m)$ is a giraffe if the complement in $M$ of the set of separating closed geodesics with long thin necks is a  collection of connected components where the genus of each component is at most $1$. 
\end{definition}
There are a certain number of possible topological configurations for a giraffe, depending on the number of boundary components of the connected components of  the complement in $M$ of the set of separating closed geodesics with long thin necks. In figure \ref{fig:genus5}, we give three  examples in genus $5$ of different configurations. 

%\ShowGrid
\begin{figure}[h]
%\ShowGrid
\leavevmode \SetLabels
\L(.23*.4) $\;$\\
\endSetLabels
\begin{center}
\AffixLabels{\centerline{\epsfig{file =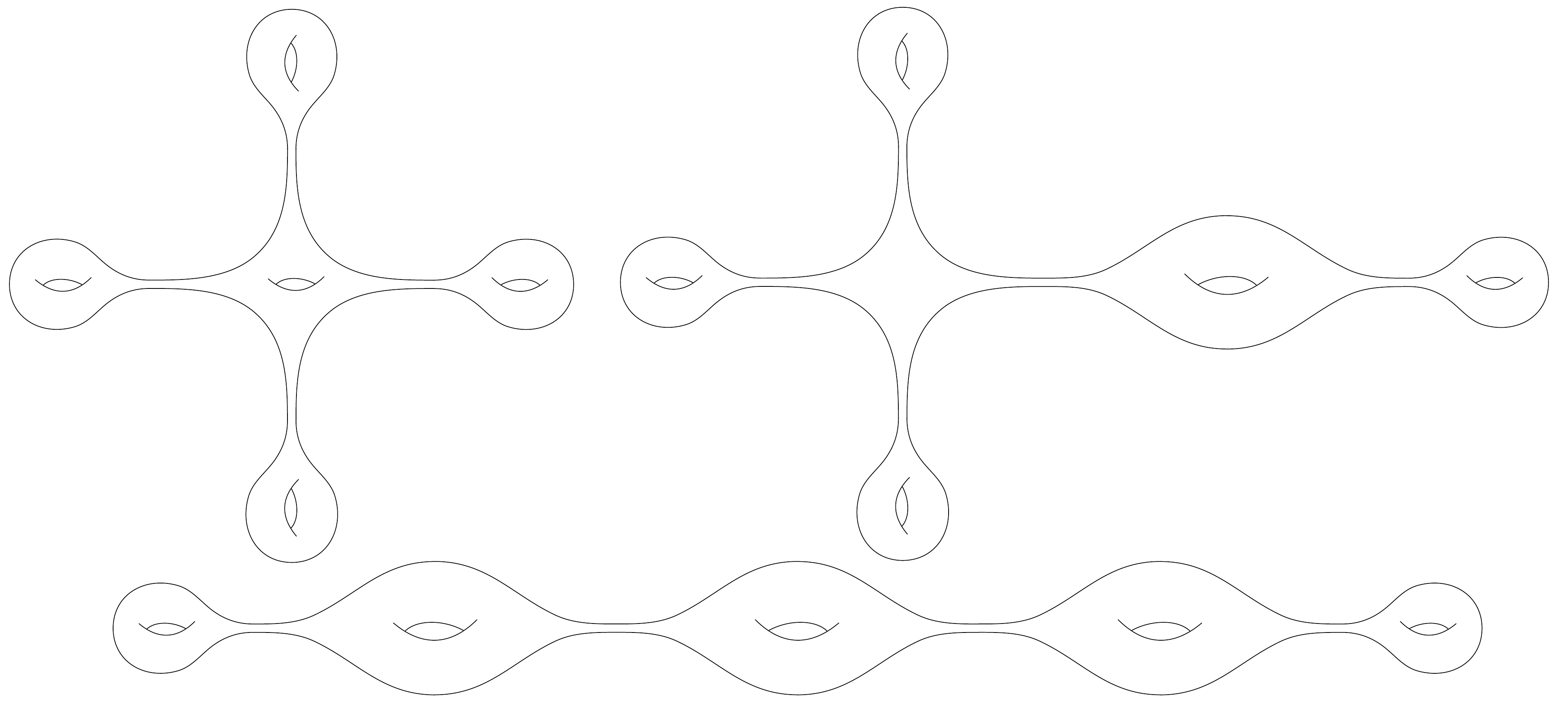,width=12.5cm,angle=0}}}
\vspace{-18pt}
\end{center}
\caption{Different configurations of genus 5 surfaces with long thin necks} \label{fig:genus5}
\end{figure}

%Note that among the connected components in the complement of this multicurve, there lie exactly $g$ tori. As minimizing geodesics do not cross the boundaries of these tori, in this case, the quantity $N(T)$ becomes a sum of $g$ functions which grow quadratically. 

Concrete examples of giraffes are given by hyperbolic surfaces with a collection of separating closed geodesics of length $\leq 2\, \arcsinh 1$ which cut the surface into a collection of tori and spheres (both with boundary). In this case the ``long thin necks" are just the disjoint collar regions around the curves, forced by the hyperbolic metric.

In the remainder of this section we assume $(M,m)$ be a giraffe of genus $g$. We denote by 
 $T_1,\ldots T_g$ the tori which are connected component of the complement in $M$ of the curves with long thin necks, and by  $H_i$ (for $i=1,\ldots g$) the image in $H_1(M,\R)$ of $H_1(T_i,\R)$ under the map induced by the canonical inclusion of $T_i$ into $M$. Beware that $H_i$ may not be isomorphic to $H_1(T_i,\R)$ if $T_i$ has more than one boundary component. 

\begin{lemma}
For each $i=1, \ldots g$, the subspace $H_i$ contains  a symplectic two-plane $S_i$ in $H_1(M,\R)$, and
$$
H_1(M,\R)= \bigoplus_{i=1}^g S_i
$$
\end{lemma}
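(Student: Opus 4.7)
The plan is to exhibit each $S_i$ as the span of (the homology classes of) two specific simple closed curves inside $T_i$, and then use the naturality and non-degeneracy of the intersection form, together with a dimension count, to conclude. Since $M$ has genus $g$ and the only topologically non-trivial pieces of the complement are the $g$ tori $T_i$ (the sphere pieces contribute nothing to $H_1$), I expect $\dim S_i = 2$ for each $i$, with the $S_i$ exhausting $H_1(M,\R)$.

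First, for each $i$ I would choose two simple closed curves $\alpha_i,\beta_i$ in the interior of $T_i$, disjoint from $\partial T_i$, whose classes form a symplectic basis of the "genus part" of $H_1(T_i,\R)$; concretely, $\alpha_i$ and $\beta_i$ intersect transversally at a single point inside $T_i$. Let $S_i \subset H_1(M,\R)$ be the subspace generated by $[\alpha_i]$ and $[\beta_i]$. Because the intersection form on $H_1(M,\R)$ is induced by transverse intersections of curves in $M$, we have $\inter([\alpha_i],[\beta_i])=\pm 1$. In particular $[\alpha_i]$ and $[\beta_i]$ are linearly independent in $H_1(M,\R)$, so $S_i$ is a $2$-plane on which $\inter(\cdot,\cdot)$ is non-degenerate; i.e., $S_i$ is a symplectic plane contained in $H_i$.

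Next I would check that the symplectic planes $S_i$ are pairwise orthogonal for $\inter$ and that their sum is direct. For $i \neq j$, the curves $\alpha_i,\beta_i \subset T_i$ are disjoint from $\alpha_j,\beta_j \subset T_j$ (since $T_i \cap T_j = \emptyset$), so $\inter(S_i, S_j)=0$. If $\sum_{i=1}^g s_i = 0$ with $s_i \in S_i$, then for every $j$ and every $t_j \in S_j$,
\begin{equation*}
0 = \inter\Bigl(\sum_{i} s_i, t_j\Bigr) = \inter(s_j, t_j),
\end{equation*}
so $s_j$ lies in the radical of $\inter|_{S_j}$; since this radical is trivial, $s_j = 0$. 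Hence the sum $\bigoplus_{i=1}^g S_i$ is direct.

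Finally, $\dim \bigoplus_{i=1}^g S_i = 2g = \dim H_1(M,\R)$, so equality holds and $H_1(M,\R) = \bigoplus_{i=1}^g S_i$, proving both assertions. The only real subtlety — the main obstacle, so to speak — is to be sure that $[\alpha_i]$ and $[\beta_i]$ do not collapse under the map $H_1(T_i,\R) \to H_1(M,\R)$; but this is automatic since the non-zero intersection number $\inter([\alpha_i],[\beta_i])=\pm 1$ in $M$ forbids either class from being null-homologous. The boundary circles of $T_i$, being separating geodesics, become zero in $H_1(M,\R)$, which is why $H_i$ has rank exactly $2$ and coincides with $S_i$.
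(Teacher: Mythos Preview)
Your proof is correct and follows essentially the same approach as the paper: pick curves $\alpha_i,\beta_i\subset T_i$ meeting once, set $S_i=\langle[\alpha_i],[\beta_i]\rangle$, use disjointness of the $T_i$ to get $\inter(S_i,S_j)=0$ for $i\neq j$, and conclude by a dimension count. Your argument for directness of the sum (via the non-degeneracy of $\inter$ on each $S_i$) is a bit more explicit than the paper's, which simply asserts that the $[\alpha_i],[\beta_i]$ form a symplectic basis of $H_1(M,\R)$; and your closing remark that $H_i=S_i$ is a true bonus observation, though not needed for the lemma as stated.
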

\proof
Since $T_i$ is a torus with holes, we may find simple closed curves 
$\alpha_i$ and $\beta_i$ in $T_i$ which intersect exactly once. So we have, for all $i,j=1, \ldots g$,
$$
|\inter( (\left[ \alpha_i \right], \left[ \beta_j \right])|= \delta_{ij}
$$
and 
$$
|\inter( (\left[ \alpha_i \right], \left[ \alpha_j \right])|= 0
$$
 Therefore the homology classes $\left[ \alpha_i \right]$ and  $\left[ \beta_i \right]$,  $i=1, \ldots g$, form a symplectic basis of $H_1(M,\R)$. We now define $S_i$ as the linear span of $\left[ \alpha_i \right]$ and  $\left[ \beta_i \right]$ and the lemma is proved.
\qed

Now denote 
$$
\mathcal{G}_i = \{ \gamma \in \mathcal{G} \co \gamma \subset T_i\}.
$$
\begin{lemma}\label{lem G i et S i}
The elements of $\mathcal{G}_i$ are in one-to-one correspondence with the elements of 
$S_i \cap H_1(M,\Z)$.

\end{lemma}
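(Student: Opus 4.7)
The plan is to show that the map $\Phi\co \mathcal{G}_i \to S_i \cap H_1(M,\Z)$ defined by $\Phi(\gamma) = [\gamma]$ is a bijection onto the non-zero integer vectors in $S_i$ (the zero class has no non-trivial closed geodesic representative). For the image, note that each boundary component of $T_i$ is, by construction, a long-thin-neck separating geodesic of $M$ and hence null-homologous in $M$. Therefore the inclusion-induced map $H_1(T_i,\Z) \to H_1(M,\Z)$ has image exactly $\Z[\alpha_i] \oplus \Z[\beta_i]$. Since $\{[\alpha_j],[\beta_j]\}_{j=1}^g$ has symplectic (and in particular unit-determinant) intersection matrix and the intersection form is unimodular on $H_1(M,\Z)$ by Poincaré duality, this basis is also a $\Z$-basis of $H_1(M,\Z)$; thus $\Z[\alpha_i]\oplus \Z[\beta_i] = S_i \cap H_1(M,\Z)$, and $\Phi$ is well-defined. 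Injectivity then follows from the genericity hypothesis (Definition \ref{def:genericity}), under which each integer homology class admits a unique minimizing multicurve, so distinct elements of $\mathcal{G}_i$ carry distinct homology classes.

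For surjectivity, fix $h \in S_i \cap H_1(M,\Z) \setminus \{0\}$ and let $\mu$ be a minimizing measure in class $h$. Since $h$ is $1$-irrational, Proposition \ref{rational} writes $\mu = \sum_j c_j \mu_{\gamma_j}$ as a positive combination of closed-orbit measures. By Lemma \ref{lemme combinaisons lineaires}, each $\gamma_j$ is homologically minimal (so $[\gamma_j] \neq 0$), and by Lemma \ref{lem:longthinnecks} each $\gamma_j$ lies in some torus component $T_{k_j}$ (spherical components are excluded since they would yield $[\gamma_j] = 0$). Using the splitting $H_1(M,\R) = \bigoplus_k S_k$ and projecting the identity $\sum_j c_j [\gamma_j] = h$ onto each summand, for each $k \neq i$ the partial sum $\mu^{(k)} := \sum_{k_j = k} c_j \mu_{\gamma_j}$ has class zero in $S_k$. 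Applying Lemma \ref{lemme combinaisons lineaires} once more, $\mu^{(k)}$ is itself minimizing in its class, so $m(\mu^{(k)}) = \|0\| = 0$, and being a positive measure of zero mass, $\mu^{(k)} = 0$. Hence $\mu$ is supported entirely in $T_i$.

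The main obstacle is to collapse this minimizing multicurve in $T_i$ into a single (possibly iterated) element of $\mathcal{G}_i$ representing $h$. I plan to exploit the fact that $\inter(\cdot,\cdot)$ restricts to a non-degenerate symplectic form on the $2$-plane $S_i$: any two disjoint simple closed geodesics in $T_i$ have zero algebraic intersection in $M$, so their $S_i$-classes have vanishing symplectic pairing and are therefore parallel; since essential simple closed curves represent primitive homology classes, the primitive classes must actually coincide. Writing $h = d h'$ with $h'$ primitive and $d \in \N^*$, genericity then provides a unique simple closed geodesic $\gamma \subset T_i$ in the free homotopy class determined by $h'$, and the minimizing multicurve reduces to $d$ copies of $\gamma$. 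Under the convention that iterated closed geodesics are counted as distinct elements of $\mathcal{G}$ (their homology classes being distinct), this identifies $\Phi^{-1}(h)$ with the $d$-th iterate of $\gamma$ in $\mathcal{G}_i$, completing the bijection.
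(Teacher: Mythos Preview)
Your argument is correct and follows the same overall strategy as the paper: show the homology map lands in $S_i\cap H_1(M,\Z)$, get injectivity from genericity, and for surjectivity show that the minimizer of a class $h\in S_i$ is a connected curve contained in $T_i$. The implementation differs in two places. First, to rule out components in $T_k$ for $k\neq i$, you pass through minimizing measures and Lemma~\ref{lemme combinaisons lineaires} (the $S_k$-part has homology zero, hence zero mass), whereas the paper works directly with the multicurve and a homology computation. Second, for connectedness, you use that simple closed curves in $M$ carry primitive homology, so all components have class $\pm h'$, and then collapse; the paper instead argues more directly that two disjoint components with proportional classes allow a replacement contradicting either minimality or genericity. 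Your route is slightly more conceptual but needs one more line than you wrote: from ``all components have class $\pm h'$'' to ``the minimizer is $d$ copies of a single geodesic'' one still has to invoke a short length-comparison (any component of class $+h'$ alone already represents $h'$, forcing the $h'$-minimizer to be connected, and then $d$ copies of it beat any other configuration).

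One small slip: in the last paragraph you speak of ``the free homotopy class determined by $h'$''. On a torus with several boundary components a primitive homology class need not single out a free homotopy class; what genericity actually gives you is the unique minimizing \emph{multicurve} in the \emph{homology} class $h'$, which you then argue is connected as above. With that wording fixed, the proof is complete.
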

\proof
First we pick an element $\gamma$ of $\mathcal{G}_i$ and prove its homology class lies in $S_i \cap H_1(M,\Z)$.

Since $\gamma$ is a closed curve its homology class lies in $H_1(M,\Z)$. Since $\left[ \alpha_i \right]$ and  $\left[ \beta_j \right]$,  $i,j=1, \ldots g$, form a  basis of $H_1(M,\R)$, we may write 
$$
\left[ \gamma \right]=\sum_{j=1}^g a_j \left[ \alpha_j \right]+b_j\left[ \beta_j \right].
$$
Now since $\gamma \subset T_i$, for any $j\neq i$, 
$$
\inter( \left[ \gamma \right],  \left[ \alpha_j \right]) = \inter( \left[ \gamma \right],  \left[ \beta_j \right]) =0
$$
but 
\begin{eqnarray*}
|\inter( \left[ \gamma \right],  \left[ \alpha_j \right])| &=& |b_j| \mbox{ and}\\
|\inter( \left[ \gamma \right],  \left[ \beta_j \right])| &=& |a_j|
\end{eqnarray*}
so $a_j=b_j=0$, that is, $\left[ \gamma \right] \in S_i$.

Conversely, let us pick an element $h$ of $S_i \cap H_1(M,\Z)$, and prove it is the homology class of an element of $\mathcal{G}_i$.

Let us write
$$
h=\sum_{j=1}^g a_j \left[ \alpha_j \right]+b_j\left[ \beta_j \right].
$$
Since $h\in S_i$, we have $a_j=b_j=0$ for any $j \neq i$.
 
By the genericity of $(M,m)$, there exists a unique $h$-minimizing multicurve $\gamma$.  We want to prove that $\gamma$ has only one connected component, and it is contained in $T_i$, so $\gamma$ is actually an element of $\mathcal{G}_i$.

Since $(M,m)$ is a giraffe, by Lemma \ref{lem:longthinnecks}, any connected component of the multicurve $\gamma$ is contained in some $T_j$. Let us assume that $\gamma$ has a connected component $\gamma_j$ contained in $T_j$, for $j\neq i$. Then the homology class of $\gamma_j$ is not zero, for if it were zero, then $\gamma \setminus \gamma_j$ would be another multicurve with homology $h$, shorter than $\gamma$, thus contradicting the minimality of $\gamma$. But if the homology class of $\gamma_j$ is  zero, then for some $j\neq i$, $a_j$ or $b_j$ is not zero, which we have already seen is impossible.  Therefore all connected components of $\gamma$ are contained in $T_i$. Now let us prove that $\gamma$ is connected. 

Assume $\gamma$ has two connected components $\gamma_1$ and $\gamma_2$, both contained in $T_i$. Then  $\gamma_1 \cap \gamma_2= \emptyset$, because $\gamma$ is a multicurve (disjoint union of simple closed curves). On the other hand, the homology classes of $\gamma_1$ and $\gamma_2$ may be written
\begin{eqnarray*}
\left[ \gamma_1 \right] &=& a_1 \left[ \alpha_i \right]+b_1\left[ \beta_i \right]\\
\left[ \gamma_2 \right] &=& a_2 \left[ \alpha_i \right]+b_2\left[ \beta_i \right]
\end{eqnarray*}
and since $\gamma_1 \cap \gamma_2= \emptyset$, the absolute value of the algebraic intersection of $\gamma_1$ and $\gamma_2$, which is $|a_1 b_2 - a_2 b_1|$, is zero. Therefore the homology classes $\left[ \gamma_1 \right]$ and $ \left[ \gamma_2 \right]$ are proportional, say  $\left[ \gamma_1 \right]= \lambda  \left[ \gamma_2 \right]$ for some $\lambda \in \Z$. Then if $l_m(\gamma_1)= |\lambda|l_m(\gamma_1)$, replacing $\gamma_1$ by $\lambda \gamma_2$ in $\gamma$, we find another $h$-minimizing multicurve, which contradicts the genericity of $(M,m)$; and $l_m(\gamma_1)\neq |\lambda|l_m(\gamma_1)$ contradicts the minimality of $\gamma$. Thus $\gamma$ has only one connected component, that is, $\gamma \in \mathcal{G}_i$. 

So the homology class maps $\mathcal{G}_i$ onto $S_i \cap H_1(M,\Z)$. It is injective by the genericity of $(M,m)$. This finishes the proof of the lemma.
\qed 

In the following result, we refer the reader to the introduction for the definitions of  $\mathcal{B}_1$, $ \mathcal{F}$, and $\Omega$.

\begin{theorem}
Let $(M,m)$ be a giraffe of genus $g$. Furthermore assume that $(M,m)$ is generic in the sense of Definition \ref{def:genericity}.  %Let $T_1, \ldots T_g$ be the tori in the complement of the set of separating closed geodesics with long thin necks. 
Then we have \begin{itemize}[itemsep=2ex,leftmargin=0.5cm]
\item 
$$ 
\lim_{T \rightarrow \infty} \frac{N(T)}{T^2}= \sum_{i=1}^g \Omega (S_i \cap \mathcal{B}_1 )
$$
\item
$$
\bigcup_{i=1}^g S_i \cap \mathcal{B}_1= \mathcal{F}
$$
\item
$$
\Omega (S_i \cap \mathcal{B}_1 )=\Omega (\mathcal{F}).
$$
\end{itemize}
\end{theorem}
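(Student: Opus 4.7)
The plan is to decompose $\mathcal{G}$ along the tori $T_i$ of the giraffe, apply a lattice point count in each symplectic two-plane $S_i$, and reassemble the result into the symplectic area of $\mathcal{F}$. By Lemma~\ref{lem:longthinnecks}, every closed geodesic in $\mathcal{G}$ is contained in some $T_i$, so $\mathcal{G}=\bigsqcup_{i=1}^{g}\mathcal{G}_i$. By Lemma~\ref{lem G i et S i}, the homology map is a bijection $\mathcal{G}_i\to (S_i\cap H_1(M,\Z))\setminus\{0\}$, and since every $\gamma\in\mathcal{G}$ is homologically minimal one has $\ell_m(\gamma)=\|[\gamma]\|$. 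Because the planes $S_i$ intersect pairwise only at $0$, no two $\mathcal{G}_i$ share a homology class, so
$$
N(T)=\sum_{i=1}^{g}N_i(T),\qquad N_i(T)=\#\bigl(T(S_i\cap\mathcal{B}_1)\cap S_i\cap H_1(M,\Z)\bigr)-1.
$$

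For the first identity, I would apply Proposition~\ref{appendice} in each plane $S_i$. The relation $|\inter([\alpha_i],[\beta_i])|=1$ from the preceding lemma makes the restriction of $\inter(\cdot,\cdot)$ to $S_i$ an area form for which the lattice $S_i\cap H_1(M,\Z)$ has covolume one; Minkowski's theorem then yields $N_i(T)/T^2\to\Omega(S_i\cap\mathcal{B}_1)$. Summing in $i$ gives $\lim_T N(T)/T^2=\sum_{i=1}^{g}\Omega(S_i\cap\mathcal{B}_1)$.

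For the second identity, I read $\mathcal{F}$ as the closure in $H_1(M,\R)$ of the radial segments $\{t[\delta]/\|[\delta]\|\co\delta\in\mathcal{G},\,t\in[0,1]\}$, so that $\mathcal{F}\subset\mathcal{B}_1$. Each such segment for $\delta\in\mathcal{G}_i$ lies in $S_i\cap\mathcal{B}_1$, giving $\mathcal{F}\subseteq\bigcup_i S_i\cap\mathcal{B}_1$. Conversely, the primitive vectors of the rank-two lattice $S_i\cap H_1(M,\Z)$ have directions dense in $S_i$, so by Lemma~\ref{lem G i et S i} the normalized classes $[\delta]/\|[\delta]\|$ for $\delta\in\mathcal{G}_i$ are dense on $S_i\cap\partial\mathcal{B}_1$; radial convexity of $S_i\cap\mathcal{B}_1$ then places the whole solid inside $\mathcal{F}$. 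Finally, the planes $S_i$ meet only at the origin, so $\bigcup_i S_i\cap\mathcal{B}_1$ is a finite union of rectifiable planar compacta overlapping only on a negligible set, and additivity of symplectic area gives $\Omega(\mathcal{F})=\sum_{i=1}^{g}\Omega(S_i\cap\mathcal{B}_1)$; combined with the first identity this is the third displayed item (with the missing $\sum_{i=1}^{g}$ supplied).

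The main obstacle is the Minkowski step: verifying that the volume normalization implicit in Proposition~\ref{appendice} matches the symplectic area $\Omega$, so that the lattice-point count produces $\Omega(S_i\cap\mathcal{B}_1)$ rather than some unrelated Euclidean area. The symplectic basis $([\alpha_i],[\beta_i])_{i=1}^{g}$ constructed in the previous lemma is exactly what makes this matching work. Everything else---density of lattice directions in a two-plane, radial convexity of the unit ball, and additivity of area over essentially disjoint pieces---is structural and follows routinely once this alignment is in place.
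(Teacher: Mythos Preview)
Your proof is correct and follows essentially the same route as the paper: decompose $\mathcal{G}$ via Lemma~\ref{lem:longthinnecks}, use the bijection of Lemma~\ref{lem G i et S i} to reduce each $N_i(T)$ to a lattice-point count in the convex body $S_i\cap\mathcal{B}_1$, invoke Minkowski (the paper cites the classical convex version rather than Proposition~\ref{appendice}, but either suffices here), and obtain the second and third items by density of rational directions and additivity over the essentially disjoint planes. You also correctly spot that the third displayed item is missing a $\sum_{i=1}^{g}$.
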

So a generic giraffe of genus $g$ verifies Conjecture \ref{conj}.

\proof
By Lemma \ref{lem:longthinnecks} we have 
\begin{equation}\label{G union G i}
\mathcal{G} = \bigcup_{i=1}^g \mathcal{G}_i
\end{equation}
so, denoting 
$$
N_i(T) = \sharp \{  \gamma \in \mathcal{G}_i \co l_m (\gamma) \leq T \},
$$
we have 
$$
N(T) = \sum_{i=1}^g N_i(T).
$$
On the other hand, by Lemma \ref{lem G i et S i}, for any $T >0$, 
\begin{eqnarray*}
N_i(T) &=& \sharp \{ h \in S_i \cap H_1(M,\Z) \co \|h\| \leq T \}\\
&=&  \sharp T S_i \cap \mathcal{B}_1.
\end{eqnarray*}
Since the set $S_i \cap \mathcal{B}_1$ is convex and the lattice $S_i \cap H_1(M,\Z)$ has determinant $1$ (because the subspace $S_i$ is symplectic), the classical Minkovski theorem says that 
$$ 
\lim_{T \rightarrow \infty} \frac{N_i(T)}{T^2}=  \Omega (S_i \cap \mathcal{B}_1 )
$$
whence, by (\ref{G union G i}),
$$ 
\lim_{T \rightarrow \infty} \frac{N(T)}{T^2}= \sum_{i=1}^g \Omega (S_i \cap \mathcal{B}_1 )
$$
which is the first statement of the theorem.

Now, by (\ref{G union G i}), $\mathcal{F}$ is the closure in $H_1(M,\R)$ of the set
$$
\bigcup_{i=1}^g \{ t \frac{\left[\gamma \right]}{\|\left[\gamma \right]\|} \co \gamma \in \mathcal{G}_i, \  t \in \left[0,1\right] \}
$$
whence, by Lemma \ref{lem G i et S i}, $\mathcal{F}$ is the closure in $H_1(M,\R)$ of the set
$$
\bigcup_{i=1}^g \{ t \frac{h}{\|h\|} \co h \in S_i \cap H_1(M,\Z), \  t \in \left[0,1\right] \}
$$
that is, 
$$
\mathcal{F}= \bigcup_{i=1}^g S_i \cap \mathcal{B}_1
$$
which is the second statement of the theorem.

Finally, since $S_i \cap S_j = \{ 0 \}$ for any $i \neq j$, we have 
$$
\Omega ( \bigcup_{i=1}^g S_i \cap \mathcal{B}_1 ) = \sum_{i=1}^g \Omega ( S_i \cap \mathcal{B}_1 ) 
$$
which is the third statement of the theorem.
\qed

In the general case - that is, when $(M,m)$ is not a giraffe - there is no reason why homology classes of homologically minimal geodesics should be distributed into finitely many lattices of rank $2$. So in the remainder of this paper what we do is look for a suitable replacement for the subspaces $S_i$, which leads us into the next subsection. 

\subsection{More notation: definition of $V(\Gamma)$ and $L(\Gamma)$}

Let
 \begin{itemize}[itemsep=2ex,leftmargin=0.5cm]
 \item $M$ be a closed, orientable, surface of genus $g$, equipped with a generic Finsler metric $m$
 \item $\gamma_1, \ldots \gamma_{g-1}$ be simple closed geodesics such that the formal sum $\Gamma := \gamma_1 + \ldots + \gamma_{g-1}$ is a minimizing multicurve
 \item $V(\Gamma)$ be the vector subspace of $H_1(M,\R)$ generated by the homology classes $\left[\gamma_1\right], \ldots \left[\gamma_{g-1}\right]$.
\end{itemize}
Recall that 
$$V(\Gamma)^{\perp} = \bigcap_{i=1}^{g-1} \left[\gamma_i\right]^{\perp},
$$
where orthogonality is meant with respect to the symplectic intersection form. The subspace $V(\Gamma)$ is integer, that is, it is generated by integer homology classes (elements of $H_1(M,\Z)$). Furthermore, the intersection form is integer, that is, for any $h,h'$ in $H_1(M,\Z)$, we have $\inter (h,h') \in \Z$.

Therefore, in coordinates relative to an integer basis of $H_1(M,\R)$ (that is, a basis of $H_1(M,\R)$ that consists of elements of $H_1(M,\Z)$), the equation
$$\inter( \left[\gamma_i\right], . )=0$$
is a system of linear equations with integer coefficients. Thus the vector subspace of its solutions is generated by integer vectors. Hence the subspace $V(\Gamma)^{\perp}$ is integer. 

Now observe that since $\Gamma$ is a minimizing multicurve, its connected components are pairwise disjoint, so 
$\inter ( \left[\gamma_i\right], \left[\gamma_j\right],)=0$ for all $i,j = 1,\ldots g-1$. This means that 
$$
V(\Gamma) \subset V(\Gamma)^{\perp}
$$
thus the quotient space $V(\Gamma)^{\perp}/ V(\Gamma) $ is well defined. Since both $V(\Gamma)$ and $V(\Gamma)^{\perp}$ are integer subspaces of $H_1(M,\R)$, the quotient group 
$$
L(\Gamma) := \left( V(\Gamma)^{\perp} \cap H_1(M,\Z) \right) / \left( V(\Gamma) \cap H_1(M,\Z) \right)
$$
is a lattice (i.e. a discrete, cocompact additive subgroup) in $V(\Gamma)^{\perp}/ V(\Gamma) $.

We want to prove that the intersection form quotients to a symplectic form on $V(\Gamma)^{\perp}/ V(\Gamma)$.
% \textbf{mq $L(\Gamma )$ est de determinant 1}. 
The kernel of the restriction of the intersection form to $V(\Gamma)^{\perp}$ is 
$$\left( V(\Gamma)^{\perp} \right)^{\perp} = V(\Gamma).
$$
 Thus the intersection form is well defined, and non-degenerate, hence symplectic, on the quotient space. 

Finally, we point out that since $(M,m)$ is generic, the homology classes $\left[ \gamma_1 \right], \ldots \left[ \gamma_{g-1} \right]$ are linearly independent, so $\dim V(\Gamma) = g-1$, thus $\dim V(\Gamma)^{\perp} = 2g-(g-1)=g+1$. Hence 
$$
\dim V(\Gamma)^{\perp}/ V(\Gamma) = g+1-(g-1) = 2.
$$
In short, we have proved that $ V(\Gamma)^{\perp}/ V(\Gamma)$ has dimension two, and comes with a symplectic form and a lattice $L(\Gamma)$, which are quotients of, respectively, the intersection form and the integer lattice.

\begin{lemma}\label{lemme nu}
Let
 \begin{itemize}[itemsep=2ex,leftmargin=0.5cm]
 \item $M$ be a closed, orientable, surface of genus $g$, equipped with a generic Finsler metric $m$,
 \item $\gamma_1, \ldots \gamma_{k}$ be simple closed geodesics, such that the formal sum $\Gamma := \gamma_1 + \ldots + \gamma_{k}$ is a minimizing multicurve,
 \item $h$ be a homology class in $ V(\Gamma)^{\perp}$.
 \end{itemize}
 Then there exists a minimizing measure $\nu$, such that $\left[\nu\right]= h \mod V(\Gamma)$, $\nu+\mu_{\Gamma}$ is a minimizing measure, and $\spt (\nu) \cap (\Gamma, \dot\Gamma) = \emptyset$, wfhere $(\Gamma, \dot\Gamma) $ denotes the set of tangent vectors to $\Gamma$ in the unit tangent bundle $T^1M$ .
\end{lemma}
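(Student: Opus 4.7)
The strategy is to produce $\nu$ by choosing a minimizing measure for the homology class $h_0+s_0h$ (where $s_0>0$ is supplied by Theorem~\ref{thm-gafa}), and then stripping away the part of its support that lies on $\Gamma$. Throughout, the notion of ``$c$-minimizing measure'' ($\sigma$ with $\langle c,[\sigma]\rangle=m(\sigma)$) will do all the work.

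First I would apply Theorem~\ref{thm-gafa} to choose $s_0:=s(h_0,h)>0$ so that the arc from $h_0/\|h_0\|$ to $(h_0+s_0h)/\|h_0+s_0h\|$ is a straight segment of $\partial\mathcal{B}_1$. This arc lies in some face of $\mathcal{B}_1$, so I can pick a supporting cohomology class $c\in H^1(M,\R)$ with $\langle c,h_0\rangle=\|h_0\|$, $\langle c,h_0+s_0h\rangle=\|h_0+s_0h\|$, and $\langle c,v\rangle\leq\|v\|$ for all $v\in H_1(M,\R)$. By standard existence, pick a minimizing invariant measure $\mu$ with $[\mu]=h_0+s_0h$. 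Both $\mu_\Gamma$ and $\mu$ are then $c$-minimizing, and the argument of Lemma~\ref{lemme combinaisons lineaires} extends to show that any non-negative linear combination of $c$-minimizing measures is itself $c$-minimizing, hence minimizing in its own homology class.

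Next I would split $\mu$ along the closed, flow-invariant set $(\Gamma,\dot\Gamma)$. Setting $\mu''=\mu|_{(\Gamma,\dot\Gamma)}$ and $\mu'=\mu|_{T^1M\setminus(\Gamma,\dot\Gamma)}$ gives two flow-invariant Borel measures with $\mu=\mu'+\mu''$. The identity
$$
\bigl(\langle c,[\mu']\rangle-m(\mu')\bigr)+\bigl(\langle c,[\mu'']\rangle-m(\mu'')\bigr)=\langle c,[\mu]\rangle-m(\mu)=0,
$$
combined with $\langle c,[\sigma]\rangle\leq m(\sigma)$ for every non-negative invariant $\sigma$, forces both summands to vanish, so $\mu'$ and $\mu''$ are separately $c$-minimizing. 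Any invariant measure supported on $(\Gamma,\dot\Gamma)$ is a non-negative combination of arc-length measures on the orbits $(\gamma_i,\pm\dot\gamma_i)$, and hence has homology class in $V(\Gamma)$; in particular $[\mu'']\in V(\Gamma)$. I then set $\nu:=\mu'/s_0$, which satisfies
$$
[\nu]=h+\frac{h_0-[\mu'']}{s_0}\equiv h \pmod{V(\Gamma)},
$$
is minimizing (being $c$-minimizing), and makes $\nu+\mu_\Gamma$ a conic combination of $c$-minimizing measures, hence minimizing.

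The hard part is to show $\spt(\nu)\cap(\Gamma,\dot\Gamma)=\emptyset$: by construction $\nu$ assigns zero mass to $(\Gamma,\dot\Gamma)$, but a priori its support could accumulate on this set. Suppose for contradiction that $(x,v)\in\spt(\mu')\cap(\gamma_i,\dot\gamma_i)$ for some $i$. Then a sequence of orbits of $\spt(\mu)\setminus(\Gamma,\dot\Gamma)$ accumulates at $(x,v)$, and hence eventually enters the neighborhood $V_i$ of $(\gamma_i,\dot\gamma_i)$ provided by Proposition~\ref{two-sided}. Each of the three options of Proposition~\ref{two-sided} must be ruled out: option (a), that such an orbit is homotopic to $\gamma_i$, is impossible by the genericity of $(M,m)$, since the unique minimizer in the homotopy class of $\gamma_i$ is $\gamma_i$ itself, which lies in $(\Gamma,\dot\Gamma)$; option (b), that it is asymptotic to $\gamma_i$, is impossible by Lemma~\ref{asymptote_fermee}; option (c), that it intersects $\gamma_i$ with consistent sign, is to be ruled out using $[\mu']\in V(\Gamma)^\perp$, which forces $\inter([\mu'],[\gamma_i])=0$: the accumulating orbits contribute with a definite sign, while the symmetric analysis at $(\gamma_i,-\dot\gamma_i)$ would require an analogous accumulation there, again in $(\Gamma,\dot\Gamma)$, giving the contradiction. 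Cleanly excluding this last case is the technically delicate step of the proof, and is precisely where the hypothesis $h\in V(\Gamma)^\perp$ is used in its full strength.
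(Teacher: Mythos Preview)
Your overall strategy matches the paper's: take an $(h_0+sh)$-minimizing measure $\mu$, split off the part living on $\Gamma$, and rescale. The substantive difference is \emph{where} you cut. You restrict $\mu$ to the closed invariant set $(\Gamma,\dot\Gamma)$ and its complement, whereas the paper restricts to the open neighborhood $V=\bigcup V_i$ of $(\Gamma,\dot\Gamma)$ furnished by Proposition~\ref{two-sided} and its complement. The paper first shows that $V\cap\spt(\mu)$ is $\phi_t$-invariant (any orbit of $\spt(\mu)$ entering $V$ must be periodic and homotopic to some $\gamma_i$), and then defines $\beta:=\mu|_{T^1M\setminus V}$. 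Since $T^1M\setminus V$ is \emph{closed}, one has $\spt(\beta)\subset T^1M\setminus V$, so $\spt(\beta)\cap(\Gamma,\dot\Gamma)=\emptyset$ for free. Your choice of cut forces you to manufacture this disjointness by hand, which is exactly your ``hard part''.

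There is a genuine gap in that hard part, namely your treatment of option~(c). Your proposed mechanism---that $[\mu']\in V(\Gamma)^\perp$ forces $\inter([\mu'],[\gamma_i])=0$, so positive crossings near $(\gamma_i,\dot\gamma_i)$ must be balanced by negative ones near $(\gamma_i,-\dot\gamma_i)$, and ``this gives the contradiction''---does not yield a contradiction: discovering a second accumulation point of $\spt(\mu')$ on $(\Gamma,\dot\Gamma)$ is just another instance of what you are trying to rule out, not an absurdity. The correct way to eliminate (c), and what the paper actually invokes, is the Graph Property applied to the measure $\mu+\mu_\Gamma$. Since $\mu$ and $\mu_\Gamma$ are both $c$-minimizing, so is $\mu+\mu_\Gamma$; hence $p$ restricted to $\spt(\mu)\cup(\Gamma,\dot\Gamma)$ is injective. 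An orbit of $\spt(\mu)$ meeting $\gamma_i$ transversally would put two distinct unit vectors over the same point of $M$ in that support, which is impossible. Once (c) is excluded this way, (a) falls to genericity as you say (any closed orbit in $\spt(\mu)$ homotopic to $\gamma_i$ is minimizing in $[\gamma_i]$, hence equals $\gamma_i$), and (b) to Lemma~\ref{asymptote_fermee}. You then conclude that no orbit of $\spt(\mu)\setminus(\Gamma,\dot\Gamma)$ enters $V$, so $\spt(\mu')\subset T^1M\setminus V$ and you are done---but at that point you have effectively reproduced the paper's splitting along $V$, which is why it is cleaner to cut there from the start.
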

\proof
By Theorem \ref{thm-gafa} there exists $s(h_0,h)>0$ such that the subset of the unit sphere $\partial \mathcal{B}_1$
$$
 \left\{\frac{ h_0+ sh}{|| h_0+sh||} \, \co s \in \left[0,s(h_0,h)\right] \right\}
$$
is a straight segment. Let $\mu$ be an $h_0 +s h$-minimizing measure, for some $0< s \leq s(h_0,h)$.

 For each $i=1,\ldots g-1$ let $V_i$ be the neighborhood of $(\gamma_i,\dot\gamma_i)$ given by Proposition \ref{two-sided}. Let $V$ be the union over $i=1,\ldots g-1$ of the $V_i$. First let us prove that $V \cap \spt (\mu )$ is $\phi_t$-invariant. Indeed by Proposition \ref{two-sided} a minimizing geodesic that enters $V$ either
\begin{itemize}[itemsep=2ex,leftmargin=0.5cm]
 \item is asymptotic to one of the $\gamma_i$,
 \item is homotopic to one of the $\gamma_i$,
 \item or cuts one of the $\gamma_i$ with constant sign.
\end{itemize}
In the second case, the minimizing geodesic cannot be in the support of any minimizing measure by Lemma \ref{asymptote_fermee}.
In the third case, the minimizing geodesic cannot be in the support of a minimizing measure $\tau$ such that $\inter (\left[\tau\right], \gamma_i)=0$ for $i=1,\ldots g-1$, in particular it cannot be in the support of an $h_0 +s h$-minimizing measure, because of the Graph Property \cite{Mather91} which says that the canonical projection $p \co TM \rightarrow M$, restricted to the support of a minimizing measure, is injective, and its inverse is Lipschitz. 

%{\it Hugo: c'est quoi the graph property?}

Therefore $V \cap \spt (\mu )$ consists of periodic orbits homotopic to some or all of the $\gamma_i$. Thus it is $\phi_t$-invariant. 

For any measurable subset $A$ of $T^{1}M$, set
\begin{eqnarray*}
\alpha (A) & := & \mu(A\cap V) \\
\beta(A) & := & \mu(A \setminus V).
\end{eqnarray*}

Then $\alpha$ and $\beta$ are two measures on $T^{1}M$. They are invariant by the geodesic flow because $V\cap \spt (\mu )$, as well as its complement in $ \spt (\mu )$, is $\phi_t$-invariant. They are both minimizing because their supports are contained in the support of a minimizing measure  \cite{Mane92}. The support of $\nu$ is disjoint from $(\Gamma, \dot\Gamma)$ by the definition of $\beta$.

Since the support of $\alpha$ consists of periodic orbits homotopic to some or all of the $\gamma_i$, the homology class of $\alpha$ is contained in $V(\Gamma)$. We have $\alpha + \beta = \mu$, hence $\left[\alpha\right]+\left[\beta\right]= h_0 +s h$.
Thus 
$$
\nu:= \frac{1}{s}\beta
$$
is a minimizing measure by Lemma \ref{lemme combinaisons lineaires}. Its homology is 
$$
h + \frac{1}{s}\left( h_0 - \left[\alpha\right] \right),
$$
 thus $\left[\nu\right]= h \mod V(\Gamma)$.
 
 Moreover, since all of the $\gamma_i$ are contained in the support of $\alpha$,$\alpha$ cannot be expressed as a convex combination of a proper subset of $\mu_1, \ldots \mu_{g-1}$, so  there exist $a_1, \ldots a_{g-1} > 0$ such that
 $$ 
 \alpha = \sum_{i=1}^{g-1} a_i \mu_i.
 $$
 Now 
 $$
 \alpha + \beta = \sum_{i=1}^{g-1} a_i\mu_i + \beta
 $$
 is a minimizing measure, so by Lemma \ref{lemme combinaisons lineaires}, 
 $$
 \nu+\mu_{\Gamma} = \sum_{i=1}^{g-1} \mu_i + \frac{1}{s} \beta
 $$
 is also a minimizing measure.
\qed
%%%%%%%%%%%%%%%%%%%%%%%%%%%%%%%%%%%%%
\subsection{Still more notation: definitions of $\mathcal{G}(\Gamma)$ and $\mathcal{F}(\Gamma)$}

Let
 \begin{itemize}[itemsep=2ex,leftmargin=0.5cm]
 \item $M$ be a closed, orientable, surface of genus $g$, equipped with a generic Finsler metric $m$
 \item $\gamma_1, \ldots \gamma_{g-1}$ be simple closed geodesics, such that the formal sum $\Gamma := \gamma_1 + \ldots + \gamma_{g-1}$ is a minimizing multicurve.
 \end{itemize}

We denote by
 \begin{itemize}[itemsep=2ex,leftmargin=0.5cm]
 \item $\mathcal{G}(\Gamma)$ the set of closed geodesics $\alpha$, disjoint from $\Gamma$, such that $\Gamma + \alpha$ is a minimizing multicurve (observe that if $\alpha \in \mathcal{G}(\Gamma)$, then $n\alpha \in \mathcal{G}(\Gamma)$ for any $n \in \Z$)
 \item $\mu_{i}$ the element of $\mathcal{M}$ which is supported on $(\gamma_i, \dot \gamma_i) \subset T^1 M$
 \item $\mu_{\Gamma} := \mu_1 + \ldots +\mu_{g-1}$ 
 \item $\mathcal{F}(\Gamma)$ the set of homology classes $h$ of measures $\mu$, supported outside of $(\Gamma, \dot\Gamma)$, such that $\mu_{\Gamma} + \mu $ is a 
 minimizing measure
 \item by $\mathcal{F}_t(\Gamma)$ the intersection of $\mathcal{F}(\Gamma)$ with the ball of the stable norm of radius $t$, centered at the origin. 
 \end{itemize}
Observe that 
\begin{itemize}[itemsep=2ex,leftmargin=0.5cm]
 \item $\mathcal{M}$ is invariant under sums and scalar multiplication by a nonnegative number, that is, $\mathcal{M}$ is a convex cone with vertex at the zero measure. 
 \item $\mathcal{F}(\Gamma)$ is invariant under scalar multiplication by a nonnegative number, that is, $\mathcal{F}(\Gamma)$ is a cone with vertex at the origin. 
 \item for any $\alpha$ in $\mathcal{G}(\Gamma)$, the homology class of $\alpha$ lies in $\mathcal{F}(\Gamma)$ 
 \item for any $\alpha$ in $\mathcal{G}(\Gamma)$, $\left[\alpha\right] \mod V(\Gamma) \in L(\Gamma)$
 \item for any $\mu$ in $\mathcal{M}$ such that $\left[\mu\right] \in \mathcal{F}(\Gamma)$, we have $\inter ( \left[\mu\right], \left[ \gamma_i\right]) = 0$ for $i=1,\ldots g-1$. Therefore 
 $\mathcal{F}(\Gamma) \subset V(\Gamma)^{\perp} $.
\end{itemize}
%%%%%%%%%%%%%%%%%%%%%%%%%%%%%%%%%%%
\section{}
%%%%%%%%%%%%%%%%%%%%%%%%%%%%%%%%%%%%
\begin{proposition}\label{injection F Gamma}
Let 
\begin{itemize}[itemsep=2ex,leftmargin=0.5cm]
 \item $M$ be a closed, orientable, surface of genus $g$, equipped with a generic Finsler metric $m$
 \item $\gamma_1, \ldots \gamma_{g-1}$ be simple closed geodesics such that the formal sum $\Gamma := \gamma_1 + \ldots + \gamma_{g-1}$ is a minimizing multicurve.
\end{itemize}
Then the canonical projection 
$$
 \mathcal{F}(\Gamma)\longrightarrow V(\Gamma)^{\perp}/ V(\Gamma)
 $$
is a bijection.
\end{proposition}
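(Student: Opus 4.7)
The plan is to establish the bijection by verifying surjectivity and injectivity of the canonical projection $\pi \co \mathcal{F}(\Gamma) \to V(\Gamma)^{\perp}/V(\Gamma)$ separately.

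Surjectivity is immediate from Lemma \ref{lemme nu}. Given $\bar h \in V(\Gamma)^{\perp}/V(\Gamma)$, lift it to $h \in V(\Gamma)^{\perp}$ and apply the lemma to obtain a minimizing measure $\nu$ with $\mathrm{supp}(\nu) \cap (\Gamma, \dot\Gamma) = \emptyset$, $\nu + \mu_\Gamma$ minimizing, and $[\nu] \equiv h \pmod{V(\Gamma)}$. Then $[\nu] \in \mathcal{F}(\Gamma)$ by definition and $\pi([\nu]) = \bar h$.

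For injectivity, suppose $h_1, h_2 \in \mathcal{F}(\Gamma)$ satisfy $h_1 - h_2 = v \in V(\Gamma)$, realized by measures $\mu_j$ with $[\mu_j] = h_j$, $\mathrm{supp}(\mu_j) \cap (\Gamma, \dot\Gamma) = \emptyset$, and $\mu_j + \mu_\Gamma$ minimizing. Write $v = \sum_{i=1}^{g-1} c_i [\gamma_i]$, split $c_i = c_i^+ - c_i^-$ into non-negative parts ($c_i^+ c_i^- = 0$), and form
\[
\nu_1 := \mu_1 + \sum_i c_i^- \mu_i, \qquad \nu_2 := \mu_2 + \sum_i c_i^+ \mu_i,
\]
so that $[\nu_1] = h_1 + \sum c_i^- [\gamma_i] = h_2 + \sum c_i^+ [\gamma_i] = [\nu_2]$. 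A direct refinement of the supporting-hyperplane argument of Lemma \ref{lemme combinaisons lineaires}, applied to the minimizing sum $\mu_j + \mu_\Gamma = \mu_j + \sum_i \mu_i$, shows that any non-negative recombination $\mu_j + \sum_i a_i \mu_i$ of its atomic parts stays minimizing; in particular, both $\nu_1$ and $\nu_2$ are minimizing. Once one proves $\nu_1 = \nu_2$ as measures, restricting to $(\Gamma, \dot\Gamma)$ yields $\sum_i c_i^+ \mu_i = \sum_i c_i^- \mu_i$, and pairwise disjointness of the $\mathrm{supp}(\mu_i)$ together with $c_i^+ c_i^- = 0$ forces every $c_i^+ = c_i^- = 0$; hence $v = 0$ and $h_1 = h_2$.

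The hard part is therefore proving $\nu_1 = \nu_2$. For rational common homology class $[\nu_1] = [\nu_2]$ this is immediate: by genericity there is a unique minimizing multicurve in any integer class, its components have linearly independent homology classes, so the minimizing measure of a prescribed total mass is pinned down by its homology class. The main obstacle is the irrational case: $h_1, h_2 \in \mathcal{F}(\Gamma)$ need not be rational, so one cannot invoke genericity directly. To handle this I would combine Proposition \ref{rational} (which constrains the support of any minimizing measure in a $1$-irrational class to finitely many periodic orbits) with the observation that both $\nu_1$ and $\nu_2$ live on the single simplicial face of $\mathcal{B}_1$ through $[\mu_\Gamma]/\|[\mu_\Gamma]\|$, whose vertices are rational by genericity. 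A limiting/continuity argument from the rational classes on this face, exploiting the cone structure of $\mathcal{F}(\Gamma)$, should then yield uniqueness of the minimizing measure in every class lying on it, completing the injectivity proof.
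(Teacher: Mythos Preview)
Your surjectivity argument matches the paper's. The injectivity argument, however, has a genuine gap precisely where you flag it: the irrational case.

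Your proposed resolution does not work. Proposition \ref{rational} only constrains \emph{1-irrational} classes, and there is no reason the common class $[\nu_1]=[\nu_2]$ should be 1-irrational. The ``limiting/continuity argument from rational classes'' is not a proof: uniqueness of minimizing measures is not known to pass to limits, and the claim that the relevant face of $\mathcal{B}_1$ is a simplex with rational vertices is not established by genericity alone (genericity concerns only integer classes). So as written, nothing forces $\nu_1=\nu_2$ when $[\nu_1]$ is irrational.

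The paper's argument supplies the missing mechanism, and it is dynamical rather than convex-geometric. First one shows not merely that $\nu_1$ and $\nu_2$ are separately minimizing, but that $\mu+\mu'+\mu_\Gamma$ is minimizing; this is needed so that the Graph Property applies to the \emph{union} of the supports. Then, since $\spt(\mu)\cup\spt(\mu')$ avoids $(\Gamma,\dot\Gamma)$, the Graph Property lets one regard $\mu+\mu'$ as an invariant measure for a Lipschitz flow on $M\setminus\Gamma$, which is a punctured torus. Katok's dichotomy for torus flows then says $\mu+\mu'$ is either ergodic or supported on periodic orbits. Ergodicity forces $\mu,\mu'$ proportional, hence $h=h'$ directly. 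In the periodic case, all supporting orbits have homology in $V(\Gamma)\oplus\R h''$ for a single integer class $h''$, and one can rescale to land in a \emph{rational} homology class, where genericity finally applies. This reduction to the torus and the ergodic/periodic dichotomy is the key idea your sketch is missing; without it, there is no route from genericity (a statement about integer classes) to uniqueness in an arbitrary class of $\mathcal{F}(\Gamma)$.
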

\proof
Let $h$ be an element of $V(\Gamma)^{\perp}/ V(\Gamma)$. 
By Lemma \ref{lemme nu} there exists a minimizing measure $\nu$, such that $\left[\nu\right] \mod V(\Gamma) = h $, 
$\spt (\nu) \cap (\Gamma, \dot\Gamma ) = \emptyset$, and $ \nu+\mu_{\Gamma}$ is a minimizing measure.
Then the homology class of $\nu$, which is $ h \mod V(\Gamma)$, is contained in $\mathcal{F}(\Gamma)$, which proves that the canonical projection 
$$\mathcal{F}(\Gamma)\longrightarrow V(\Gamma)^{\perp}/ V(\Gamma)$$
is onto. Now let us prove that it is one-to-one.

Take $h,h'$ in $\mathcal{F}(\Gamma)$ such that $h=h' \mod V(\Gamma)$, and measures $\mu, \mu'$ in $\mathcal{M}$ such that 
\begin{itemize}[itemsep=2ex,leftmargin=0.5cm]
 \item $\left[\mu\right]=h$ and $\left[\mu'\right]=h'$
 \item $\spt (\mu)\cap (\Gamma, \dot\Gamma)=\spt (\mu')\cap (\Gamma, \dot\Gamma)= \emptyset $
 \item $\mu + \mu_{\Gamma}= \mu + \mu_1+\ldots \mu_{g-1}$ and $\mu' + \mu_{\Gamma}= \mu' + \mu_1+\ldots \mu_{g-1}$ are minimizing.
\end{itemize} 
We start by proving that $\mu +\mu' + \mu_{\Gamma}$ is minimizing. Since $h=h' \mod V(\Gamma)$, there exist real numbers $\lambda_1, \ldots \lambda_{g-1}$ such that 
$$
h'= h + \lambda_1\left[\gamma_1\right]+ \ldots +\lambda_{g-1}\left[\gamma_{g-1}\right] .
$$
Set, for any $\lambda \in \left] 0,1 \right[ $, 
$$
h_{\lambda} := \lambda \frac{h_0}{\|h_0\|} + (1-\lambda) \frac{h'}{\|h'\|}.
$$
We have
\begin{eqnarray*}
\lambda \frac{h_0}{\|h_0\|} + (1-\lambda) \frac{h'}{\|h'\|}&=&
 \frac{\lambda}{\|h_0\|} \sum_{i=1}^{g-1} \left[\gamma_i \right] + \frac{1-\lambda}{\|h'\|}\left( h+ \sum_{i=1}^{g-1}\lambda_i \left[\gamma_i \right] \right) \\
&=& \sum_{i=1}^{g-1}\left( \frac{\lambda}{\|h_0\|} + \frac{(1-\lambda)\lambda_i }{\|h'\|} \right)\left[\gamma_i \right]+ \frac{1-\lambda}{\|h'\|} h
\end{eqnarray*}
so if we take $\lambda$ sufficiently close to $1$, $h_{\lambda}$ is a linear combination, with positive coefficients, of 
$h ,\left[\gamma_1\right], \ldots ,\left[\gamma_{g-1}\right] $. Thus by Lemma \ref{lemme combinaisons lineaires}, since these coefficients are positive, 
the measures
\begin{eqnarray*}
\nu' & := & \lambda \frac{\mu_{\Gamma}}{\|h_0\|} + (1-\lambda) \frac{\mu'}{\|h'\|} \\
\nu & := & \sum_{i=1}^{g-1}\left( \frac{\lambda}{\|h_0\|} + \frac{(1-\lambda)\lambda_i }{\|h'\|} \right)\mu_i + \frac{1-\lambda}{\|h'\|} \mu
\end{eqnarray*}
are both minimizing, and since their homology class is $h_{\lambda} $, they are $h_{\lambda} $-minimizing. Therefore $\nu + \nu'$ is $2h_{\lambda} $-minimizing. Using Lemma \ref{lemme combinaisons lineaires} again, we deduce that $\mu +\mu' + \mu_{\Gamma}$ is minimizing.

Hence, by the Graph Property, $\mu+\mu'$ may be viewed as an invariant measure of a Lipschitz flow on $M\setminus \Gamma$, which is homeomorphic to a torus with $g-1$ punctures, so $\mu+\mu'$ may be viewed as an invariant measure of a Lipschitz flow on $\T^2$. Recall from \cite{Katok} that an invariant measure of a Lipschitz flow on $\T^2$ is either ergodic, or supported on periodic orbits (or both, if it is supported on one periodic orbit). 

\textbf{First case}: $\mu+\mu'$ is ergodic. Then either there exists $a \in \R$ such that $\mu= a \mu'$, or there exists $a \in \R$ such that $\mu'= a \mu$.
So, either there exists $a \in \R$ such that $h= a h'$, or there exists $a \in \R$ such that $h'= a h$. By the hypothesis that $h=h' \mod V(\Gamma)$, this entails that either $a=1$, or $h=h'=0 \mod V(\Gamma)$.

If $a=1$, we have $h=h'$, which proves that the canonical projection 
$$\mathcal{F}(\Gamma)\longrightarrow V(\Gamma)^{\perp}/ V(\Gamma)$$
is one-to-one, and thus the proposition.

If $h=h'=0 \mod V(\Gamma)$, then there exist $a_1, \ldots , a_{g-1}, b_1, \ldots , b_{g-1}$ in $\R$ such that 
$$
h= \sum_{i=1}^{g-1}a_i h_i \mbox{ and } h'= \sum_{i=1}^{g-1}b_i h_i.
$$
Let us take $\lambda >0$ such that 
$$
1- \lambda a_i >0 \mbox{ and } 1- \lambda b_i >0 \ \forall i=1, \ldots , g-1.
$$
Recall that the measures $\mu + \mu_{\Gamma}= \mu + \mu_1+\ldots \mu_{g-1}$ and $\mu' + \mu_{\Gamma}= \mu' + \mu_1+\ldots \mu_{g-1}$ are minimizing. So by Lemma \ref{lemme combinaisons lineaires}, 
\begin{eqnarray*}
\tau & := & \lambda\mu +\sum_{i=1}^{g-1}(1-\lambda a_i) \mu_i\\
\tau' & := & \lambda\mu' +\sum_{i=1}^{g-1}(1-\lambda b_i) \mu_i
\end{eqnarray*}
are also minimizing. Now $\left[\tau\right]= \left[\tau'\right] = h_0$, which is rational, hence by genericity $\tau= \tau'$. Let us take, for each $ i=1, \ldots , g-1$, a neighborhood $V_i$ of $(\gamma_i, \dot\gamma_i)$ in $T^1 M$, such that \begin{itemize}
 \item $V_i \cap \spt (\mu) = V_i \cap \spt (\mu') = \emptyset$
 \item $V_i \cap V_j = \emptyset$ if $i \neq j$.
 \end{itemize}
Then we have, for each $ i=1, \ldots , g-1$, 
\begin{eqnarray*}
\tau (V_i) & = & (1-\lambda a_i) \mu_i (V_i) = (1-\lambda a_i) \mbox{length}(\gamma_i) \\
\tau' (V_i) & = & (1-\lambda b_i) \mu_i (V_i) = (1-\lambda b_i) \mbox{length}(\gamma_i), 
\end{eqnarray*}
which, since $\tau= \tau'$, implies $a_i=b_i \ \forall i=1, \ldots , g-1$, whence $h=h'$. So the proposition is proved in the case when $\mu+\mu'$ is ergodic.

\textbf{Second case}: $\mu+\mu'$ is supported on periodic orbits. Recall that two simple closed curves on a torus which do not intersect have homology classes which are proportional, that is, one is a multiple of the other. Also recall that the homology of $M \setminus \Gamma$ is isomorphic to $H_1 (M,\R) / V(\Gamma)$. Thus there exist $h''$ in $H_1 (M,\Z)$ such that 
$$
\left[\mu\right], \left[\mu'\right] \in V(\Gamma)\oplus \R h''
$$
so there exist $a_1, \ldots , a_{g}$ in $\R$ such that 
$$
h=\left[\mu\right]= \sum_{i=1}^{g-1}a_i h_i + a_g h''.
$$
Since $h=h' \mod V(\Gamma)$, there exist $b_1, \ldots , b_{g-1}$ in $\R$ such that 
$$
h'=\left[\mu'\right]= \sum_{i=1}^{g-1}b_i h_i + a_g h''.
$$
Let us take $\lambda >0$ such that 
$$
1- \lambda a_i >0,\ 1- \lambda b_i >0 \ \forall i=1, \ldots , g-1 \mbox{ and } \lambda a_g \in \Q.
$$
Then 
\begin{eqnarray*}
\tau & := & \lambda\mu +\sum_{i=1}^{g-1}(1-\lambda a_i) \mu_i\\
\tau' & := & \lambda\mu' +\sum_{i=1}^{g-1}(1-\lambda b_i) \mu_i
\end{eqnarray*}
are both minimizing measures, by Lemma \ref{lemme combinaisons lineaires}. But 
$$
\left[\tau\right]= \left[\tau'\right] = h_0 + a_g h''
$$
which is a rational homology class. Hence by genericity $\tau= \tau'$. As in the first case, we then show that $a_i=b_i \ \forall i=1, \ldots , g-1$, whence $h=h'$. This finishes the proof of Proposition \ref{injection F Gamma}.
\qed
\begin{lemma}\label{ F Gamma ferm�}
Let 
\begin{itemize}[itemsep=2ex,leftmargin=0.5cm]
 \item $M$ be a closed, orientable, surface of genus $g$, equipped with a generic Finsler metric $m$
 \item $\gamma_1, \ldots \gamma_{g-1}$ be simple closed geodesics such that the formal sum $\Gamma := \gamma_1 + \ldots + \gamma_{g-1}$ is a minimizing multicurve.
\end{itemize}
Then $ \mathcal{F}(\Gamma)$ is closed in $H_1 (M,\R)$. 
\end{lemma}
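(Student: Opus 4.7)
The plan is to show $\mathcal{F}(\Gamma)$ is sequentially closed in $H_1(M,\R)$. So pick $h_n \in \mathcal{F}(\Gamma)$ with $h_n \to h$, together with witness measures $\mu_n \in \mathcal{M}$ satisfying $\left[\mu_n\right]=h_n$, $\spt(\mu_n) \cap (\Gamma, \dot\Gamma) = \emptyset$, and $\mu_n + \mu_\Gamma$ minimizing. The proof of Lemma \ref{lemme combinaisons lineaires} shows more than its statement: if $\mu_n+\mu_\Gamma$ is minimizing, then each summand $\mu_n$ is minimizing in its own homology class, so $m(\mu_n) = \|h_n\|$ is bounded. The goal is to extract a weak-$*$ subsequential limit $\mu$ and check that it again witnesses $h \in \mathcal{F}(\Gamma)$.

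The key step (and the only real obstacle) is to trap the supports of $\mu_n$ in a fixed closed set $K$ disjoint from $(\Gamma,\dot\Gamma)$; otherwise the weak-$*$ limit might acquire mass on $(\Gamma,\dot\Gamma)$ and we would only recover a witness for some class $h^* \equiv h \pmod{V(\Gamma)}$, not for $h$ itself. To achieve the trap, let $V_i$ be the neighborhood of $(\gamma_i,\dot\gamma_i)$ supplied by Proposition \ref{two-sided}. Any orbit in $\spt(\mu_n)$ whose projection enters $V_i$ must, by Proposition \ref{two-sided}, be either asymptotic to $\gamma_i$, homotopic to $\gamma_i$, or cross $\gamma_i$ transversally with constant sign. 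The asymptotic case is ruled out by Lemma \ref{asymptote_fermee}; the homotopic case would produce another closed geodesic of the same length and homology as $\gamma_i$, contradicting genericity; and the transverse-crossing case is ruled out by the Graph Property applied to the minimizing measure $\mu_n + \mu_\Gamma$, since at a crossing point the tangent direction of the crossing orbit and that of $\gamma_i$ would give two elements of $\spt(\mu_n+\mu_\Gamma)$ projecting to the same point of $M$. (This is exactly the argument used inside the proof of Lemma \ref{lemme nu}.) Hence $\spt(\mu_n) \subset K := T^1 M \setminus \bigcup_{i=1}^{g-1} V_i$ for all $n$, and $K$ is a fixed closed set disjoint from $(\Gamma,\dot\Gamma)$.

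With uniform mass bound and uniform support control, Banach--Alaoglu gives a subsequence $\mu_n \to \mu$ weakly-$*$. Then $\mu$ is geodesic-flow invariant (weak-$*$ limits of invariant measures are invariant), $\spt(\mu) \subset K$, and for any closed $1$-form $\omega$ one has
\[
\int_{T^1 M} \omega\, d\mu = \lim_{n\to\infty} \int_{T^1 M} \omega\, d\mu_n = \lim_{n\to\infty} \langle [\omega], h_n \rangle = \langle [\omega], h\rangle,
\]
so $[\mu]=h$. Finally, continuity of total mass on the compact space $T^1 M$ and of the stable norm give
\[
m(\mu + \mu_\Gamma) = \lim_{n} m(\mu_n + \mu_\Gamma) = \lim_{n} \|[\mu_\Gamma]+h_n\| = \|[\mu_\Gamma]+h\| = \|[\mu+\mu_\Gamma]\|,
\]
so $\mu + \mu_\Gamma$ is minimizing. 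All three defining properties of $\mathcal{F}(\Gamma)$ are satisfied, hence $h \in \mathcal{F}(\Gamma)$ and the set is closed. The heart of the argument is the support-trapping step, which is precisely what converts the weak-$*$ machinery from a mere surjectivity result modulo $V(\Gamma)$ (as in Lemma \ref{lemme nu}) into the full closedness statement.
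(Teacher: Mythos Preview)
Your proof is correct and follows essentially the same approach as the paper's: take weak-$*$ limits of the witness measures and use the fixed neighborhoods from Proposition~\ref{two-sided} together with Lemma~\ref{asymptote_fermee} to trap all supports uniformly away from $(\Gamma,\dot\Gamma)$. You spell out more of the details than the paper does---the mass bound for Banach--Alaoglu, the explicit elimination of the homotopic case via genericity and of the crossing case via the Graph Property, and the verification that minimality passes to the limit---but the underlying argument is identical.
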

\proof
Take
\begin{itemize}[itemsep=2ex,leftmargin=0.5cm]
 \item a sequence $h_n$ of elements of $\mathcal{F}(\Gamma)$, that converges to some $h$ in $H_1(M,\R)$
 \item for each $n$ in $\N$, an $h_n$-minimizing measure $\mu_n$ such that $\mu_n + \mu_{\Gamma}$ is minimizing, and $\spt (\mu_n) \cap (\Gamma, \dot\Gamma) = \emptyset$.
\end{itemize}
Then, if $\mu$ is any limit point of the sequence $\mu_n$ in the weak$^*$ topology, $\mu$ is $h$-minimizing, and $\mu + \mu_{\Gamma}$ is minimizing because a limit of minimizing measure is minimizing. To prove that $h \in \mathcal{F}(\Gamma)$, it only remains to prove that $\spt (\mu) \cap (\Gamma, \dot\Gamma) = \emptyset$. By Proposition \ref{two-sided} and Lemma \ref{asymptote_fermee}, there exists a neighborhood $V$ of $(\Gamma, \dot\Gamma)$ in $T^1 M$ such that for all $n$ in $\N$, $\spt (\mu_n) \cap V = \emptyset$. Then for any continuous function $f$ supported inside $V$, we have $\int f d \mu_n = 0$, hence $\int f d \mu = 0$, so $\mu$ is supported outside $V$. Thus $\spt (\mu) \cap (\Gamma, \dot\Gamma) = \emptyset$, which proves that $h \in \mathcal{F}(\Gamma)$. Therefore $\mathcal{F}(\Gamma)$ is closed in $H_1 (M,\R)$. 
\begin{lemma}\label{homeo F Gamma}
Let 
\begin{itemize}[itemsep=2ex,leftmargin=0.5cm]
 \item $M$ be a closed, orientable, surface of genus $g$, equipped with a generic Finsler metric $m$
 \item $\gamma_1, \ldots \gamma_{g-1}$ be simple closed geodesics such that the formal sum $\Gamma := \gamma_1 + \ldots + \gamma_{g-1}$ is a minimizing multicurve.
\end{itemize}
Then the canonical projection 
$$
 \mathcal{F}(\Gamma)\longrightarrow V(\Gamma)^{\perp}/ V(\Gamma)
 $$
is a homeomorphism. 
\end{lemma}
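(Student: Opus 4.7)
The projection $\pi \co V(\Gamma)^{\perp} \to V(\Gamma)^{\perp}/V(\Gamma)$ is continuous, and by Proposition \ref{injection F Gamma} its restriction to $\mathcal{F}(\Gamma)$ is a bijection onto the two-dimensional quotient. The plan is therefore to prove that the inverse is continuous, which amounts to showing that if $\bar{h}_n \to \bar{h}$ in $V(\Gamma)^{\perp}/V(\Gamma)$ and $h_n \in \mathcal{F}(\Gamma)$ is the unique preimage of $\bar{h}_n$, then $h_n$ converges (in $H_1(M,\R)$) to the unique preimage of $\bar{h}$.

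The only thing that could go wrong is that the sequence $h_n$ escapes to infinity, so first I would prove a boundedness statement: if $\{\bar{h}_n\}$ is bounded in $V(\Gamma)^{\perp}/V(\Gamma)$, then $\{h_n\}$ is bounded in $V(\Gamma)^{\perp}$. Suppose not; then passing to a subsequence I may assume $\|h_n\| \to +\infty$. Now I use the fact, recorded in the bullet list preceding Proposition \ref{injection F Gamma}, that $\mathcal{F}(\Gamma)$ is a cone with vertex at the origin: the renormalized vectors $h_n/\|h_n\|$ still lie in $\mathcal{F}(\Gamma)$, all have stable norm $1$, and live in the finite-dimensional space $V(\Gamma)^{\perp}$. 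Hence some subsequence converges to a limit $h^*$ with $\|h^*\|=1$, and by Lemma \ref{ F Gamma ferm�} we have $h^* \in \mathcal{F}(\Gamma)$. On the other hand, $\pi(h^*) = \lim \bar{h}_n/\|h_n\| = 0$, since the numerators are bounded while the denominators blow up. Thus $h^* \in V(\Gamma)$.

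At this point I invoke the uniqueness part of Proposition \ref{injection F Gamma}: the zero class $0$ trivially belongs to $\mathcal{F}(\Gamma)$ (witnessed by the zero measure, for which $\mu_{\Gamma} + 0 = \mu_{\Gamma}$ is minimizing), and the injectivity of $\pi$ on $\mathcal{F}(\Gamma)$ forces $h^* = 0$. This contradicts $\|h^*\|=1$, so $\{h_n\}$ must be bounded after all.

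With boundedness in hand, I extract from any subsequence of $\{h_n\}$ a further subsequence converging to some $h \in V(\Gamma)^{\perp}$. Closedness of $\mathcal{F}(\Gamma)$ (Lemma \ref{ F Gamma ferm�}) puts $h$ in $\mathcal{F}(\Gamma)$, and continuity of $\pi$ gives $\pi(h) = \bar{h}$. Injectivity of $\pi$ on $\mathcal{F}(\Gamma)$ then pins down $h$ as the unique preimage of $\bar{h}$, so the full sequence $h_n$ converges to it, completing the proof. The main ingredient to be careful about is the rescaling argument in the unboundedness step, where I rely crucially on the cone structure of $\mathcal{F}(\Gamma)$ and on the fact that $0 \in \mathcal{F}(\Gamma)$ to apply Proposition \ref{injection F Gamma} to $h^*$ and $0$; none of this should present real difficulty.
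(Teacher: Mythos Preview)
Your proposal is correct and follows essentially the same route as the paper's own proof: reduce to continuity of the inverse, prove boundedness of the preimages by rescaling onto the unit sphere and using the cone structure together with closedness of $\mathcal{F}(\Gamma)$ to reach a contradiction with injectivity at $0$, then conclude by compactness and uniqueness of the preimage. The only cosmetic addition is your explicit justification that $0 \in \mathcal{F}(\Gamma)$ via the zero measure, which the paper leaves implicit.
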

\proof
The continuity of the canonical projection is obvious, so all we have to prove is the continuity of the inverse map. Take a sequence $h_n$ of points in $V(\Gamma)^{\perp}/ V(\Gamma)$, which converges to some $h \in V(\Gamma)^{\perp}/ V(\Gamma)$. By Proposition \ref{injection F Gamma}, there exist elements $h', h'_n, n\in \N$ of $ \mathcal{F}(\Gamma)$ such that $h'_n \mod V(\Gamma) = h_n$ for all $n\in \N$, and $h' \mod V(\Gamma) = h$.
What we need to prove is that $h'_n$ converges to $h'$.

First let us prove, by contradiction, that the sequence $h'_n$ is bounded in $H_1 (M,\R)$. Assume, after possibly taking a subsequence, that $h'_n$ goes to infinity. Consider the sequence 
$$h''_n :=h'_n / \|h'_n\|.$$
Then $h''_n \in \mathcal{F}(\Gamma) $ by homogeneity of $\mathcal{F}(\Gamma)$. On the other hand, for all $n$, $h''_n $ lies on the unit sphere of the stable norm, which is compact, so $h''_n$ has a limit point $h''$ such that $\|h''\|=1$. Since $ \mathcal{F}(\Gamma)$ is closed in $H_1(M,\R)$ by Lemma \ref{ F Gamma ferm�}, $h''$ lies in $ \mathcal{F}(\Gamma)$. Now the projection of $h''_n$ to $V(\Gamma)^{\perp}/ V(\Gamma)$
is $h_n / \|h'_n\|$, which converges to zero. By continuity of the projection, it follows that $h''$ projects to $0 \in V(\Gamma)^{\perp}/ V(\Gamma)$. But $0 \in H_1 (M,\R)$ lies in $ \mathcal{F}(\Gamma)$, and projects to $0 \in V(\Gamma)^{\perp}/ V(\Gamma)$. By the injectivity of the projection (Proposition \ref{injection F Gamma}), this entails $h''=0$, which contradicts $\|h''\|=1$. This contradiction shows that the sequence $h'_n$ is bounded in 
$H_1 (M,\R)$.

Therefore it has limit points in $H_1(M,\R)$. Any such limit point lies in $ \mathcal{F}(\Gamma)$ by Lemma \ref{ F Gamma ferm�}. By continuity of the projection, any limit point of the sequence $h'_n$ projects to $h$. By the injectivity of the projection (Proposition \ref{injection F Gamma}), the sequence $h'_n$ then converges to $h'$, which proves the lemma.
\qed

\begin{lemma}\label{bord negligeable}
Let $M$ be a closed, orientable, surface of genus $g$, equipped with a generic Finsler metric $m$ and let $\gamma_1, \ldots \gamma_{g-1}$ be simple closed geodesics such that the formal sum $\Gamma := \gamma_1 + \ldots + \gamma_{g-1}$ is a minimizing multicurve.

Then the canonical projection $\mathcal{P}_1(\Gamma)$ of $\mathcal{F}_1(\Gamma)$ to $V(\Gamma)^{\perp}/ V(\Gamma) $ is a compact subset of $V(\Gamma)^{\perp}/ V(\Gamma) $, whose boundary is Lebesgue-negligible.
\end{lemma}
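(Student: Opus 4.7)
My plan is to identify $\mathcal{P}_1(\Gamma)$ as the unit sublevel set of a continuous positively homogeneous function $N$ on the two-dimensional quotient $V(\Gamma)^{\perp}/V(\Gamma)$, and then to check that the boundary $\{N=1\}$ is, in polar coordinates, the graph of a continuous function, hence Lebesgue-negligible.

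Compactness comes for free: $\mathcal{F}(\Gamma)$ is closed in $H_1(M,\R)$ by Lemma \ref{ F Gamma ferm�}, so $\mathcal{F}_1(\Gamma) = \mathcal{F}(\Gamma) \cap \mathcal{B}_1$ is closed and contained in the compact unit ball of the stable norm, hence compact; its continuous image $\mathcal{P}_1(\Gamma)$ under the canonical projection is then compact as well.

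For the shape of $\mathcal{P}_1(\Gamma)$, let $\phi : V(\Gamma)^{\perp}/V(\Gamma) \to \mathcal{F}(\Gamma)$ denote the inverse of the homeomorphism from Lemma \ref{homeo F Gamma}, and set
\[
N(\bar h) := \| \phi(\bar h) \|.
\]
I claim that $N$ is continuous, positively homogeneous of degree one, and strictly positive off the origin. Continuity is immediate from Lemma \ref{homeo F Gamma} together with continuity of the stable norm. For positive homogeneity: whenever $t \ge 0$, the element $t\phi(\bar h)$ lies in the cone $\mathcal{F}(\Gamma)$ and projects to $t\bar h$, so by the uniqueness statement of Proposition \ref{injection F Gamma} we must have $\phi(t\bar h) = t\phi(\bar h)$. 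That $N$ vanishes only at $0$ is the injectivity of $\phi$ combined with $\phi(0) = 0$. One then reads off that $\mathcal{P}_1(\Gamma) = \{N \le 1\}$, that its interior equals $\{N < 1\}$ (the inclusion $\supseteq$ is continuity of $N$; the reverse inclusion uses positive homogeneity to push any point with $N(\bar h)=1$ slightly outward), and hence that its boundary is exactly $\{N = 1\}$.

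To finish, fix a linear isomorphism $V(\Gamma)^{\perp}/V(\Gamma) \cong \R^2$ and introduce polar coordinates $(r,\theta) \in [0,\infty) \times S^1$. Positive homogeneity gives $N(r,\theta) = r\,\rho(\theta)$ with $\rho := N(1,\cdot)$ continuous and bounded away from $0$ on the compact circle $S^1$. The equation $N = 1$ becomes $r = 1/\rho(\theta)$, so in the $(r,\theta)$-plane $\{N=1\}$ is the graph of a bounded continuous function of $\theta$; by Fubini the slice at each fixed $\theta$ is a single point, so this graph has $dr\,d\theta$-measure zero. Since the polar change of variables is locally Lipschitz away from $r=0$ and Lipschitz maps send Lebesgue-null sets to Lebesgue-null sets, $\{N = 1\}$ itself has Lebesgue measure zero in $V(\Gamma)^{\perp}/V(\Gamma)$, as required.

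The only step with any delicacy is verifying that $\phi$ respects nonnegative dilations, but this falls straight out of the uniqueness in Proposition \ref{injection F Gamma}; the rest of the argument is routine.
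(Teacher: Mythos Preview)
Your proof is correct and follows essentially the same route as the paper. Both arguments deduce compactness from Lemma~\ref{ F Gamma ferm�} and the compactness of $\mathcal{B}_1$, and both show that each ray from the origin in $V(\Gamma)^{\perp}/V(\Gamma)$ meets $\partial\mathcal{P}_1(\Gamma)$ at exactly one point (namely at radius $1/\|h'\|$ where $h'$ is the unique lift to $\mathcal{F}(\Gamma)$), then invoke Fubini; your packaging via the continuous positively homogeneous function $N(\bar h)=\|\phi(\bar h)\|$ is a clean reformulation of the paper's more hands-on verification that $\lambda=1/\|h'\|$ is the unique boundary parameter along each ray.
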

\proof
Since $\mathcal{F}_1(\Gamma)$ is the intersection of $\mathcal{F}(\Gamma)$, which is closed in $H_1(M,\R)$ by Lemma \ref{ F Gamma ferm�}, with the unit ball of the stable norm, which is compact, it turns out that $\mathcal{F}_1(\Gamma)$ is compact, and so is its projection to $V(\Gamma)^{\perp}/ V(\Gamma) $.

Now we prove that the boundary $\partial \mathcal{P}_1(\Gamma)$ of $\mathcal{P}_1(\Gamma)$ is Lebesgue-negligible.
For this we show that for any $h$ in $V(\Gamma)^{\perp}/ V(\Gamma)\setminus \{0\} $, there exists a unique $\lambda > 0 $ such that $\lambda h$ lies on $\partial \mathcal{P}_1(\Gamma)$. The Lebesgue-negligibility of $\partial \mathcal{P}_1(\Gamma)$ then follows from Fubini's theorem. Take 
\begin{itemize}[itemsep=2ex,leftmargin=0.5cm]
 \item $h$ in $V(\Gamma)^{\perp}/ V(\Gamma)\setminus \{0\} $
 \item $h'$ in $\mathcal{F}(\Gamma)$, given by Proposition \ref{injection F Gamma}, such that $h' \mod V(\Gamma) = h$.
\end{itemize}
Then, for any $\lambda >0$, $\lambda h'$ lies in $\mathcal{F}(\Gamma)$, and projects to $\lambda h$. By Proposition \ref{injection F Gamma}, 
$\lambda h'$ is the unique element of $\mathcal{F}(\Gamma)$ which projects to $\lambda h$. Now, for $\lambda \leq \frac{1}{\|h'\|}$, we have $\lambda h' \in \mathcal{F}_1(\Gamma)$, so $\lambda h \in \mathcal{P}_1(\Gamma)$. For $\lambda > \frac{1}{\|h'\|}$, we have $\lambda h' \not\in \mathcal{F}_1(\Gamma)$, and since $\lambda h'$ is the unique element of $\mathcal{F}(\Gamma)$ which projects to $\lambda h$, it turns out that $\lambda h \not\in \mathcal{P}_1(\Gamma)$. Thus the point $\frac{1}{\|h'\|}h$ lies on $\partial \mathcal{P}_1(\Gamma)$, because it can be approximated both from inside and outside $ \mathcal{P}_1(\Gamma)$. On the other hand, for any $\lambda < \frac{1}{\|h'\|}$, if $h_n$ is any sequence of elements of $V(\Gamma)^{\perp}/ V(\Gamma)$ which converges to $\lambda h$, denoting $h'_n$ the only point of $\mathcal{F}(\Gamma)$ such that $h'_n \mod V(\Gamma) = h_n$, by Lemma \ref{homeo F Gamma}, $h'_n$ converges to $\lambda h'$.

Recall that $\lambda < \frac{1}{\|h'\|}$, so $\|\lambda h'\| <1$, hence for $n$ large enough we have $\|h'_n\|<1$. Thus, for $n$ large enough, $h_n \in \mathcal{P}_1(\Gamma)$, that is, $\lambda h$ lies in the interior of $\mathcal{P}_1(\Gamma)$ as a subset of $V(\Gamma)^{\perp}/ V(\Gamma)$.

Therefore $\lambda = \frac{1}{\|h'\|}$ is the unique $\lambda > 0 $ such that $\lambda h$ lies on $\partial \mathcal{P}_1(\Gamma)$. This finishes the proof of the lemma. 
\qed

%%%%%%%%%%%%%%%%%%%%
%\subsection{}
%{\it Hugo: j'ai enlev� le subsection sans titre ici - je ne voyais pas exactement son utilit�.}

\begin{lemma}\label{injection G Gamma}
Let $M$ be a closed, orientable, surface of genus $g$, equipped with a generic Finsler metric $m$ and let $\gamma_1, \ldots \gamma_{g-1}$ be simple closed geodesics such that the formal sum $\Gamma := \gamma_1 + \ldots + \gamma_{g-1}$ is a minimizing multicurve.

Then the map 
$$
\begin{array}{rcl}
\psi_{\Gamma} \co \mathcal{G}(\Gamma) & \longrightarrow & L(\Gamma) \\
\alpha & \longmapsto & \left[ \alpha \right] \mod V(\Gamma)
\end{array}
$$
is a bijection.
\end{lemma}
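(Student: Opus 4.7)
The plan is to establish injectivity via Proposition \ref{injection F Gamma} plus genericity, and surjectivity by using Theorem \ref{thm-gafa} to single out a large integer class whose unique minimizing multicurve displays the required $\alpha \in \mathcal{G}(\Gamma)$. For injectivity, I would pick $\alpha, \alpha' \in \mathcal{G}(\Gamma)$ with $\psi_\Gamma(\alpha) = \psi_\Gamma(\alpha')$. Both $[\alpha]$ and $[\alpha']$ belong to $\mathcal{F}(\Gamma)$ and project to the same element of $V(\Gamma)^\perp/V(\Gamma)$, so Proposition \ref{injection F Gamma} forces $[\alpha] = [\alpha']$ in $H_1(M,\R)$. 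Each of $\alpha, \alpha'$ is a connected component of a minimizing multicurve ($\Gamma + \alpha$, respectively $\Gamma + \alpha'$) and therefore minimizes length in its own homology class, so genericity yields $\alpha = \alpha'$.

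For surjectivity, observe that $\mathcal{G}(\Gamma)$ is closed under integer scalar multiplication with $\psi_\Gamma(n\alpha) = n\psi_\Gamma(\alpha)$, so I may reduce to the case of primitive $h \in L(\Gamma)$. Fix an integer lift $h_1 \in V(\Gamma)^\perp \cap H_1(M,\Z)$ of $h$; since $V(\Gamma)^\perp \cap H_1(M,\Z)$ is a saturated sublattice of $H_1(M,\Z)$, primitivity of $h$ in $L(\Gamma)$ forces $h_1$ to be primitive in $H_1(M,\Z)$. Theorem \ref{thm-gafa} supplies $s^* > 0$ such that $\{(h_0 + sh_1)/\|h_0 + sh_1\| : s \in [0, s^*]\}$ is a straight segment on $\partial \mathcal{B}_1$. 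Choosing an integer $N \geq 1/s^*$, the integer class $Nh_0 + h_1$ has direction on this segment, so by genericity its unique minimizing multicurve $\Gamma'_N$ is supported on the vertices of the smallest face $F$ of $\mathcal{B}_1$ containing the segment.

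The key step is to pin down $F$. Since $h_0/\|h_0\|$ lies interior to the face $F_\Gamma$ with vertices $v_i := [\gamma_i]/\|\gamma_i\|$ for $1 \leq i \leq g-1$, and since $h_1 \notin V(\Gamma)$, the face $F$ strictly contains $F_\Gamma$, so $\dim F \geq g-1$. By genericity (Section \ref{sec:genericity}), all faces of $\mathcal{B}_1$ are simplices with at most $g$ vertices, so $F$ is a $(g-1)$-simplex whose vertex set is $\{v_1, \ldots, v_{g-1}\}$ together with exactly one additional vertex $v_g = [\alpha]/\|\alpha\|$ for a simple closed geodesic $\alpha$ disjoint from $\Gamma$; the face $F$ then exhibits $\Gamma + \alpha$ as a minimizing multicurve, so $\alpha \in \mathcal{G}(\Gamma)$. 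Still by genericity, $\Gamma'_N$ is integer-weighted and unique, so $\Gamma'_N = \sum_{i=1}^{g-1} a_i \gamma_i + c\alpha$ with positive integers $a_i, c$; here $c \geq 1$ because $c = 0$ would place $Nh_0 + h_1$ inside $V(\Gamma)$, contrary to $h_1 \notin V(\Gamma)$.

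Projecting the homology identity $Nh_0 + h_1 = \sum_i a_i[\gamma_i] + c[\alpha]$ into $V(\Gamma)^\perp / V(\Gamma)$ produces $h = c\,\psi_\Gamma(\alpha)$, with $c \in \Z_{\geq 1}$ and $\psi_\Gamma(\alpha) \in L(\Gamma)$; primitivity of $h$ in $L(\Gamma)$ then forces $c = 1$ and $\psi_\Gamma(\alpha) = h$. The hard part will be the face analysis of paragraph three: one must combine the simpliciality of faces of $\mathcal{B}_1$ and the $g$-component bound on minimizing multicurves---both from genericity---with $\dim F_\Gamma = g-2$ to force exactly one new vertex on $F$, and one must verify that $\Gamma'_N$ has integer weights so that the divisibility step finishing the argument is legitimate.
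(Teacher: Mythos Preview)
Your injectivity argument is identical to the paper's. For surjectivity you take a genuinely different route.

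The paper does not touch the face lattice directly. It starts from Proposition~\ref{injection F Gamma} to produce $h\in\mathcal{F}(\Gamma)$ with $h\equiv l\ \mathrm{mod}\ V(\Gamma)$, then adds enough of $\mu_\Gamma$ to make the homology integer and invokes Proposition~\ref{rational} to force the $h$-minimizing measure $\mu$ to be supported on periodic orbits. A separate genericity argument (building two minimizing measures $\tau_1,\tau_2$ with the same integer homology and concluding $\tau_1=\tau_2$) shows that $\spt(\mu)$ is actually a single closed geodesic $\gamma$; the scalar $\lambda$ in $h=\lambda[\gamma]$ is then seen to be an integer by pairing with a curve in $M\setminus\Gamma$ meeting $\gamma$ once. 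Your approach bypasses all of this measure-theoretic machinery (Proposition~\ref{rational}, the Graph Property, the $\tau_1=\tau_2$ trick) by using the assertion after Definition~\ref{def:genericity} that under genericity every face of $\mathcal{B}_1$ is a simplex with at most $g$ vertices: since the minimal face $F$ containing the Theorem~\ref{thm-gafa} segment strictly contains $F_\Gamma$ and has an integer interior point $(Nh_0+h_1)/\|Nh_0+h_1\|$, it must be a $(g-1)$-simplex with exactly one new vertex $[\alpha]/\|\alpha\|$, and $\alpha\in\mathcal{G}(\Gamma)$ drops out immediately.

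A few remarks on the details you flagged. The primitivity of the lift $h_1$ is correct (it uses that $V(\Gamma)^\perp\cap H_1(M,\Z)$ is saturated) but is never actually used downstream; you only need $h_1\in H_1(M,\Z)$. The positivity of the $a_i$ follows from the fact that $(Nh_0+h_1)/\|Nh_0+h_1\|$ lies in the \emph{relative interior} of $F$, which in turn holds because for $0<1/N<s^*$ this point is in the relative interior of the segment and the minimal face containing a convex set meets the relative interior of that set. Finally, the integrality of $c$ (which you correctly isolate as the crux) is obtained exactly as in the paper's last paragraph: pick a curve $\beta\subset M\setminus\Gamma$ with $|\inter([\alpha],[\beta])|=1$ and pair the identity $Nh_0+h_1=\sum a_i[\gamma_i]+c[\alpha]$ against $[\beta]$ to get $c=\pm\inter(h_1,[\beta])\in\Z$.
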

\proof
First let us observe that the map $\psi_{\Gamma}$ is well defined : for any $\alpha$ in $\mathcal{G}(\Gamma)$, we have
$$
\left[\alpha\right] \in \left( \mathcal{F}(\Gamma) \cap H_1 (M,\Z) \right) \ \subset \ \left( V(\Gamma)^{\perp} \cap H_1 (M,\Z)\right) 
$$
so 
$$
 \left[ \alpha \right] \mod V(\Gamma) \in \left( V(\Gamma)^{\perp} \cap H_1 (M,\Z)/ V(\Gamma) \right) = L(\Gamma).
 $$

Now let us prove that $\psi_{\Gamma}$ is injective. Take $\alpha, \alpha'$ in $\mathcal{G}(\Gamma)$ such that $ \left[ \alpha \right] = \left[ \alpha' \right] \mod V(\Gamma)$. Then, since $\left[\alpha\right], \left[\alpha'\right]$ in $\mathcal{F}(\Gamma)$, Proposition \ref{injection F Gamma} says that $\left[\alpha\right] = \left[\alpha'\right]$. Then $\alpha= \alpha'$ because the metric $m$ is generic. Therefore $\psi_{\Gamma}$ is injective.

It remains to prove that $\psi_{\Gamma}$ is onto. Take $l$ in $L(\Gamma)$. By Proposition \ref{injection F Gamma}, there exists $h$ in $\mathcal{F}(\Gamma)$ such that $h \mod V(\Gamma) =l$. Hence, by the definition of $L(\Gamma)$, there exists $l_1 \in H_1(M,\Z) \cap V(\Gamma)^{\perp}$, and $a_1,\ldots, a_{g-1}$ in $\R$ such that $h = l_1 + a_1 h_1, + \ldots + a_{g-1}h_{g-1}$. 

Since $h \in \mathcal{F}(\Gamma)$, we may take an $h$-minimizing measure $\mu$, supported away from $(\Gamma, \dot\Gamma)$, such that $\mu + \mu_{\Gamma}$ is minimizing. Then by Lemma \ref{lemme combinaisons lineaires}, for any integer $n$ such that $n \geq a_i$, $i=1, \ldots g-1$, the measure 
$$
\nu := \mu + (n- a_{1})\mu_1 +\ldots +(n- a_{g-1})\mu_{g-1}
 $$
 is minimizing. But its homology class is $l_1 + h_1 + \ldots h_{g-1}$, which is integer, so by Proposition \ref{rational} $\nu$ is supported on closed geodesics. Thus $\mu$ is supported on closed geodesics. Take two closed geodesics $\gamma$ and $\gamma'$ in the support of $\mu$. Then $\gamma$ and $\gamma'$ are simple, non-separating closed curves on the punctured torus $M\setminus \Gamma$, and by the Graph Theorem they are either disjoint or equal, so their homology classes (as curves in $\T^2$) are proportional. Thus there exist an integer homology class $h_g \in H_1(M,Z)$, and integers  $a_g, b_g$, such that 
\begin{eqnarray*}
\left[\gamma\right] &=& \sum_{i=1}^{g-1}a_i h_i + a_g h_g \\
\left[\gamma'\right] &=& \sum_{i=1}^{g-1}b_i h_i + b_g h_g 
\end{eqnarray*}
Reversing, if necessary, the orientations of $\gamma$ and $\gamma'$, we may assume that $a_g, b_g \geq 0$. Take $n, n' \in \N$ such that $na_g=n'b_g$.
Let \begin{itemize}[itemsep=2ex,leftmargin=0.5cm]
 \item $\tau$ be the measure in $\mathcal{M}$ supported by $(\gamma, \dot\gamma)$, whose homology class is $\left[\gamma\right] $
 \item $\tau'$ be the measure in $\mathcal{M}$ supported by $(\gamma', \dot\gamma')$, whose homology class is $\left[\gamma'\right] $.
\end{itemize}
Since $\mu + \mu_{\Gamma}$ is minimizing, and $(\gamma, \dot\gamma)$ is contained in the support of $\mu$, $\tau+ \mu_{\Gamma}$ is minimizing, and so is
$$
\tau_1 := n \nu + \sum_{i=1}^{g-1} \max (n'b_i - n a_i, 0) \mu_i
$$
by Lemma \ref{lemme combinaisons lineaires}.
Likewise, 
$$
\tau_2 := n' \nu' + \sum_{i=1}^{g-1} \max (n a_i-n'b_i , 0) \mu_i
$$
is minimizing. But 
$$
\left[\tau_1\right]= \left[\tau_2\right]= n a_g h_g + \sum_{i=1}^{g-1}\left( \max (n'b_i , n a_i \right) h_i
$$
which is an integer homology class. Therefore, since the metric $m$ is generic, we have $\tau_1=\tau_2$, whence $\tau= \tau'$, and $\gamma=\gamma'$. So the $h$-minimizing measure $\mu$ is supported on the closed geodesic $\gamma$. Thus there exists $\lambda \in \R$ such that 
$h = \lambda \left[\gamma\right]$. Since $\gamma$ is a simple closed curve in $M\setminus \Gamma$, there exists a closed curve $\alpha$ in $M\setminus \Gamma$ which intersects $\gamma$ exactly once, so $\inter ( h, \left[\alpha\right]) = \lambda$. We also have 
$\inter ( h_i, \left[\alpha\right]) = 0$ because $\gamma$ lies in $M\setminus \Gamma$. Since $h = l_1 + a_1 h_1, + \ldots + a_{g-1}h_{g-1}$, it follows that $\inter ( l_1, \left[\alpha\right])= \lambda$. Now, since $l_1 \in H_1 (M,\Z)$, this entails $\lambda \in \Z$. 
On the other hand, since $\mu + \mu_{\Gamma}$ is minimizing, we have $\gamma \in \mathcal{G}(\Gamma)$, so 
$\lambda \gamma \in \mathcal{G}(\Gamma)$. Since $\lambda \left[\gamma\right] \mod V(\Gamma) =l$, this proves the surjectivity of $\psi_{\Gamma}$.
\qed

%%%%%%%%%%%%%%%%%%%
%\subsection{}
%Recall that $\mathcal{F}_t(\Gamma)$ is the intersection of $\mathcal{F}(\Gamma)$ with the ball of the stable norm of radius $t$, centered at the origin. 
For any $t \geq 0$, denote by $N_{\Gamma}(t)$ the number of elements of $L(\Gamma)$ which are contained in the canonical projection of $\mathcal{F}_t(\Gamma)$ to $V(\Gamma)^{\perp}/ V(\Gamma) $.

\begin{lemma}\label{cardinaux G Gamma et L Gamma}
Let $M$ be a closed, orientable, surface of genus $g$, equipped with a generic Finsler metric $m$ and let $\gamma_1, \ldots \gamma_{g-1}$ be simple closed geodesics such that the formal sum $\Gamma := \gamma_1 + \ldots + \gamma_{g-1}$ is a minimizing multicurve.

Then, for any $t \geq 0$, $N_{\Gamma}(t)$ equals the number $\sharp \mathcal{G}_t(\Gamma)$ of elements of $\mathcal{G}(\Gamma)$ of length less than $t$.
\end{lemma}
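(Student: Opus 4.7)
The plan is to show that the bijection $\psi_{\Gamma}\co \mathcal{G}(\Gamma)\longrightarrow L(\Gamma)$ provided by Lemma \ref{injection G Gamma} restricts, for each $t\geq 0$, to a bijection between $\mathcal{G}_t(\Gamma)$ and the set of elements of $L(\Gamma)$ lying in the canonical projection of $\mathcal{F}_t(\Gamma)$. The crucial observation is that because $\mathcal{G}(\Gamma)\subset \mathcal{G}$ (every $\alpha\in \mathcal{G}(\Gamma)$ is a connected component of the minimizing multicurve $\Gamma+\alpha$, hence homologically minimal), one has $\ell_m(\alpha)=\|\left[\alpha\right]\|$ for every $\alpha\in \mathcal{G}(\Gamma)$. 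Consequently $\alpha\in \mathcal{G}_t(\Gamma)$ if and only if $\left[\alpha\right]\in \mathcal{F}_t(\Gamma)$, which translates the length constraint on $\mathcal{G}_t(\Gamma)$ into the stable-norm constraint defining $\mathcal{F}_t(\Gamma)$.

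First, I would check the forward inclusion: if $\alpha\in \mathcal{G}_t(\Gamma)$, then $\left[\alpha\right]\in \mathcal{F}(\Gamma)$ (by definition of $\mathcal{G}(\Gamma)$) and $\|\left[\alpha\right]\|=\ell_m(\alpha)\leq t$, so $\left[\alpha\right]\in \mathcal{F}_t(\Gamma)$, and hence $\psi_{\Gamma}(\alpha)=\left[\alpha\right]\mod V(\Gamma)$ lies in the projection of $\mathcal{F}_t(\Gamma)$. This gives $\sharp \mathcal{G}_t(\Gamma)\leq N_{\Gamma}(t)$.

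For the reverse inclusion, take $l\in L(\Gamma)$ lying in the projection of $\mathcal{F}_t(\Gamma)$. Let $h\in \mathcal{F}_t(\Gamma)$ with $h\mod V(\Gamma)=l$, and let $\alpha=\psi_{\Gamma}^{-1}(l)\in \mathcal{G}(\Gamma)$, so that $\left[\alpha\right]\mod V(\Gamma)=l$. Both $h$ and $\left[\alpha\right]$ lie in $\mathcal{F}(\Gamma)$ and project to $l$, so Proposition \ref{injection F Gamma} forces $\left[\alpha\right]=h$. Therefore $\ell_m(\alpha)=\|\left[\alpha\right]\|=\|h\|\leq t$, i.e.\ $\alpha\in \mathcal{G}_t(\Gamma)$. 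This gives the other inequality, and the two combine to yield $\sharp \mathcal{G}_t(\Gamma)=N_{\Gamma}(t)$.

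There is essentially no real obstacle here: the lemma is a bookkeeping consequence of the three previously established facts, namely the bijection $\psi_{\Gamma}$ (Lemma \ref{injection G Gamma}), the injectivity of the projection $\mathcal{F}(\Gamma)\to V(\Gamma)^{\perp}/V(\Gamma)$ (Proposition \ref{injection F Gamma}), and the equality $\ell_m(\alpha)=\|\left[\alpha\right]\|$ that holds for every homologically minimal $\alpha$. The only point requiring a moment of attention is making sure one uses Proposition \ref{injection F Gamma} to identify, for a given $l\in L(\Gamma)$, the unique lift $h\in \mathcal{F}(\Gamma)$ with the homology class $\left[\alpha\right]$ of the corresponding geodesic, which is precisely what turns the inequality $\|h\|\leq t$ into the length condition defining $\mathcal{G}_t(\Gamma)$.
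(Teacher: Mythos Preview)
Your proof is correct and follows essentially the same approach as the paper's own argument: both establish that $\psi_{\Gamma}$ restricts to a bijection between $\mathcal{G}_t(\Gamma)$ and $L(\Gamma)\cap\bigl(\mathcal{F}_t(\Gamma)\bmod V(\Gamma)\bigr)$, using Lemma~\ref{injection G Gamma} for the bijection, Proposition~\ref{injection F Gamma} to identify the unique lift in $\mathcal{F}(\Gamma)$ with $[\alpha]$, and the equality $\ell_m(\alpha)=\|[\alpha]\|$ for homologically minimal $\alpha$ to convert the stable-norm bound into the length bound.
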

\proof
We prove the lemma by showing that the map $\alpha \mapsto \left[\alpha\right] \mod V(\Gamma)$ is a one-to-one correspondance between 
$$
L(\Gamma) \bigcap \left( \mathcal{F}_t(\Gamma) \mod V(\Gamma) \right)
$$
and the subset $\mathcal{G}_t(\Gamma)$ of $\mathcal{G}(\Gamma)$ which consists of closed geodesics of length $\leq t$.

Take $\alpha \in \mathcal{G}_t(\Gamma)$. Then $ \left[\alpha\right] \in \mathcal{F}(\Gamma) $, and $\|\left[\alpha\right]\| \leq \mbox{length}(\alpha) \leq t$, so $ \left[\alpha\right] \in \mathcal{F}_t(\Gamma) $. Hence 
$$
 \left[\alpha\right] \mod V(\Gamma) \in \left( \mathcal{F}_t(\Gamma) \mod V(\Gamma) \right).
 $$
Besides, $ \left[\alpha\right] \in H_1(M,\Z)$ so $ \left[\alpha\right] \mod V(\Gamma) \in L(\Gamma)$. Thus 
$$
 \left[\alpha\right] \mod V(\Gamma) \in \left( L(\Gamma) \bigcap \left( \mathcal{F}_t(\Gamma) \mod V(\Gamma) \right) \right).
 $$
 Conversely, take $h$ in $L(\Gamma) \bigcap \left( \mathcal{F}_t(\Gamma) \mod V(\Gamma) \right)$.
Since $h \in L(\Gamma)$, by Lemma \ref{injection G Gamma} there exists a unique $\alpha$ in $ \mathcal{G}(\Gamma) $ such that 
$ \left[\alpha\right] =h\mod V(\Gamma)$. Now $h \in \mathcal{F}_t(\Gamma) \mod V(\Gamma)$, so there exists $h' \in \mathcal{F}_t(\Gamma)$ such that 
$h'= h\mod V(\Gamma)$. But Proposition \ref{injection F Gamma} says there exists a unique $h'' \in \mathcal{F}(\Gamma)$ such that 
$h''= h\mod V(\Gamma)$; since $ \left[\alpha\right] \in \mathcal{F}(\Gamma)$ and $ \left[\alpha\right] =h\mod V(\Gamma)$, it follows that 
$ \left[\alpha\right] =h'=h''$, so $ \left[\alpha\right] \in \mathcal{F}_t(\Gamma) $. Therefore $\|\left[\alpha\right]\| \leq t$. But the closed geodesic $\alpha$ is minimizing since it lies in $\mathcal{G}(\Gamma)$, so $\|\left[\alpha\right]\| = \mbox{length}(\alpha) $, thus $\mbox{length}(\alpha) \leq t$, which finishes the proof of the lemma.
\qed

%
%\begin{lemma}\label{mesure egale aire symplectique}
%The Lebesgue measure of the canonical projection $\mathcal{P}_t(\Gamma)$ of $\mathcal{F}_1(\Gamma)$ to $V(\Gamma)^{\perp}/ V(\Gamma) $ equals $\Omega (\mathcal{F}_1(\Gamma))$, where $\Omega$ denotes the symplectic area, with respect to the symplectic form $\inter$.
%\end{lemma}

\begin{theorem}
Let 
\begin{itemize}[itemsep=2ex,leftmargin=0.5cm]
 \item $M$ be a closed, orientable, surface of genus $g$, equipped with a generic Finsler metric $m$
 \item $\gamma_1, \ldots \gamma_{g-1}$ be simple closed geodesics such that the formal sum $\Gamma := \gamma_1 + \ldots + \gamma_{g-1}$ is a minimizing multicurve
 \item $ \mathcal{G}_t (\Gamma)$ be the set of closed geodesics $\alpha$ of length $\leq t$, such that $\alpha +\Gamma$ is a minimizing multicurve.
\end{itemize}
Then,  $\mbox{Leb}$ being  the normalized Lebesgue measure in $\R^n$, 
$$
\lim_{t \rightarrow + \infty} \frac{1}{t^2}\sharp \mathcal{G}_t (\Gamma) = \mbox{Leb} \mathcal{P}_1 (\Gamma)% \Omega \left(\mathcal{F}_1(\Gamma)\right).
$$
\end{theorem}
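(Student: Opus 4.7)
The plan is to reduce the counting problem to a standard lattice-point count in the two-dimensional quotient space $V(\Gamma)^{\perp}/V(\Gamma)$, and then invoke a Minkowski-type theorem on the asymptotic density of lattice points in dilated compact sets with negligible boundary (this is the role of Proposition~\ref{appendice}, alluded to in the acknowledgements).

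First, by Lemma~\ref{cardinaux G Gamma et L Gamma}, we have the identity
$$
\sharp \mathcal{G}_t(\Gamma) = N_{\Gamma}(t) = \sharp \bigl( L(\Gamma) \cap \mathcal{P}_t(\Gamma) \bigr),
$$
where $\mathcal{P}_t(\Gamma)$ denotes the canonical projection of $\mathcal{F}_t(\Gamma)$ to $V(\Gamma)^{\perp}/V(\Gamma)$. So everything reduces to understanding how the projected sets $\mathcal{P}_t(\Gamma)$ behave as $t \to \infty$.

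Second, I would show the homogeneity relation $\mathcal{P}_t(\Gamma) = t \cdot \mathcal{P}_1(\Gamma)$. This follows from two facts: $\mathcal{F}(\Gamma)$ is a cone with vertex at the origin (noted in the observations following the definition of $\mathcal{F}(\Gamma)$), and the stable norm is positively homogeneous. Combining these, $\mathcal{F}_t(\Gamma) = \mathcal{F}(\Gamma) \cap t\mathcal{B}_1 = t \bigl( \mathcal{F}(\Gamma) \cap \mathcal{B}_1 \bigr) = t \mathcal{F}_1(\Gamma)$, and the projection is linear, hence commutes with scalar multiplication.

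Third, with the counting rewritten as
$$
\sharp \mathcal{G}_t(\Gamma) = \sharp \bigl( L(\Gamma) \cap t \mathcal{P}_1(\Gamma) \bigr),
$$
I would apply a standard lattice-point counting result in two dimensions: if $K$ is a compact subset of a two-dimensional vector space equipped with a lattice $L$ of covolume one (with respect to a chosen volume form), and if the boundary $\partial K$ has zero Lebesgue measure, then
$$
\lim_{t \to \infty} \frac{\sharp(L \cap tK)}{t^2} = \mathrm{Leb}(K).
$$
All three hypotheses hold in our setting: compactness of $\mathcal{P}_1(\Gamma)$ and Lebesgue-negligibility of its boundary are provided by Lemma~\ref{bord negligeable}; and $L(\Gamma)$ has covolume one because, as observed just before Lemma~\ref{lemme nu}, the quotient symplectic form endows $V(\Gamma)^{\perp}/V(\Gamma)$ with a volume form for which $L(\Gamma)$, as the quotient of the integer lattice by the integer sub-lattice $V(\Gamma) \cap H_1(M,\Z)$, has determinant one. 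Combining these gives the claimed limit.

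The main obstacle, insofar as there is one, is not in the asymptotic estimate itself (which is essentially classical once the correct setup is in place) but in verifying that the right objects appear on both sides: one must be certain that $\mathcal{P}_1(\Gamma)$ is genuinely a Jordan-measurable compact set, so that the lattice-point count converges to its Lebesgue measure rather than merely being bounded by it. Lemma~\ref{bord negligeable}, which shows that $\partial \mathcal{P}_1(\Gamma)$ meets each ray from the origin in exactly one point, is the substantive input here, and it is precisely what allows the standard Minkowski-style argument to go through without modification.
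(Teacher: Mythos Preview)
Your proposal is correct and follows essentially the same route as the paper's own proof: invoke Lemma~\ref{cardinaux G Gamma et L Gamma} to translate the count into $\sharp(L(\Gamma)\cap \mathcal{P}_t(\Gamma))$, observe the homogeneity $\mathcal{P}_t(\Gamma)=t\,\mathcal{P}_1(\Gamma)$, and then apply Proposition~\ref{appendice} using the compactness and boundary-negligibility supplied by Lemma~\ref{bord negligeable}. Your write-up is in fact more explicit than the paper's on two points (the justification of homogeneity and the covolume-one remark for $L(\Gamma)$), but the architecture is identical.
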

\proof
By Lemma \ref{cardinaux G Gamma et L Gamma}, for any $t \geq 0$, 
$\sharp \mathcal{G}_t (\Gamma) $ equals the number of elements of $L(\Gamma)$ which are contained in the projection $\mathcal{P}_t(\Gamma)$ of
 $\mathcal{F}_t(\Gamma)$ to $V^{\perp}(\Gamma) / V(\Gamma)$. Observe that for any $t\geq 0$, we have 
 $$
 \mathcal{P}_t(\Gamma)= t \mathcal{P}_1(\Gamma).
 $$
 By Lemma \ref{bord negligeable}, $\mathcal{P}_1(\Gamma)$ is a compact subset of $V^{\perp}(\Gamma) / V(\Gamma)$, whose boundary is Lebesgue-negligible. Then Proposition \ref{appendice} says that the number of elements of $L(\Gamma)$ which are contained in $\mathcal{P}_t(\Gamma)$, divided by $t^2$, converges to the Lebesgue measure of $\mathcal{P}_1 (\Gamma)$. 
\qed

%\begin{theorem}
%Let 
%\begin{itemize}
% \item $M$ be a closed, orientable, surface of genus $g$, equipped with a generic Finsler metric $m$
% \item $\mathcal{F}_1$ be the union, over all minimizing multicurves $\Gamma$ with $g-1$ connected components, of the sets $\mathcal{F}_1(\Gamma)$
% \item $ \mathcal{G}_t $ be the set of closed geodesics $\alpha$ of length $\leq t$, such that $\alpha $ minimizes the length among all multicurves with the same homology.
% \end{itemize}
%Then 
%$$
%\lim_{t \rightarrow + \infty} \frac{1}{t^2}\sharp \mathcal{G}_t = \Omega \left(\mathcal{F}_1\right).
%$$
%\end{theorem}

\appendix

%%%%%%%%%%%%%
\section{}
The classical Minkovsky theorem says that if $C$ is a convex body in $\R^n$, then 
$$
\lim_{t \rightarrow + \infty} \frac{1}{t^n} \sharp \left( tC \cap \Z^n \right) = \mbox{ Leb } (C)
$$
where $\sharp$ denotes the cardinality of a set and $\mbox{Leb}$ is the normalized Lebesgue measure in $\R^n$. Here we need to deal with non-convex bodies so we prove the following
\begin{proposition}\label{appendice}
Let $C$ be a compact subset of $\R^2$ such that the boundary $\partial C$ of $C$ has zero Lebesgue measure. Then 
$$
\lim_{t \rightarrow + \infty} \frac{1}{t^2} \sharp \left( tC \cap \Z^2 \right) = \mbox{ Leb } (C).
$$
\end{proposition}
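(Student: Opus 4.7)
The proof is a sandwich argument via Riemann integration: I approximate $C$ from inside and outside by finite unions of small axis-aligned squares, reduce lattice-point counting to a one-cube estimate, and exploit the hypothesis that $\partial C$ is Lebesgue-negligible precisely to make the gap between the two approximations vanish.

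\textbf{Step 1: Counting lattice points in a single square.} The first observation is elementary: for an axis-aligned square $Q \subset \R^2$ of side length $s$, a direct count gives
$$
\sharp (tQ \cap \Z^2) = t^2 s^2 + O(t)
$$
as $t \to + \infty$, so $\lim_{t \to \infty} \sharp (tQ \cap \Z^2)/t^2 = \mbox{Leb}(Q)$. By finite additivity (inclusion--exclusion), the same limit holds when $Q$ is replaced by any finite union $A$ of closed squares with pairwise disjoint interiors, with limit $\mbox{Leb}(A)$.

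\textbf{Step 2: Approximation by dyadic squares.} Since $C$ is bounded and $\partial C$ has Lebesgue measure zero, the indicator $\chi_C$ is Riemann integrable. Fix $\epsilon > 0$ and consider the dyadic grid of mesh $2^{-k}$ on some large box containing $C$. Let
$$
A_k = \bigcup \{ Q \co Q \text{ is a closed dyadic square of mesh } 2^{-k},\ Q \subset C \},
$$
$$
B_k = \bigcup \{ Q \co Q \text{ is a closed dyadic square of mesh } 2^{-k},\ Q \cap C \neq \emptyset \}.
$$
Then $A_k \subset C \subset B_k$, and $\mbox{Leb}(A_k)$, $\mbox{Leb}(B_k)$ are exactly the lower and upper Darboux sums of $\chi_C$ for this partition. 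By Riemann integrability of $\chi_C$, for $k$ large enough we have $\mbox{Leb}(A_k) \geq \mbox{Leb}(C) - \epsilon$ and $\mbox{Leb}(B_k) \leq \mbox{Leb}(C) + \epsilon$.

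\textbf{Step 3: Sandwich.} From $A_k \subset C \subset B_k$ we obtain, for every $t > 0$,
$$
\sharp (tA_k \cap \Z^2) \leq \sharp (tC \cap \Z^2) \leq \sharp (tB_k \cap \Z^2).
$$
Dividing by $t^2$ and letting $t \to + \infty$, Step 1 applied to $A_k$ and $B_k$ yields
$$
\mbox{Leb}(C) - \epsilon \leq \mbox{Leb}(A_k) \leq \liminf_{t \to \infty} \frac{\sharp (tC \cap \Z^2)}{t^2} \leq \limsup_{t \to \infty} \frac{\sharp (tC \cap \Z^2)}{t^2} \leq \mbox{Leb}(B_k) \leq \mbox{Leb}(C) + \epsilon.
$$
Since $\epsilon$ is arbitrary, the limit exists and equals $\mbox{Leb}(C)$.

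The only subtlety, and the place where the hypothesis $\mbox{Leb}(\partial C) = 0$ actually enters, is in Step 2: without it $\chi_C$ would fail to be Riemann integrable and the upper/lower Darboux sums would not converge to $\mbox{Leb}(C)$; Steps 1 and 3 are then routine. Note that convexity is nowhere used, which is why this generalizes the classical Minkowski statement to our setting.
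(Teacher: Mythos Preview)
Your proof is correct and follows essentially the same approach as the paper's: a sandwich argument via inner and outer approximations by finite unions of axis-aligned boxes, using the hypothesis $\mbox{Leb}(\partial C)=0$ to control the gap. The paper phrases this by covering $\partial C$ with finitely many small open rectangles and the remaining compact part of $C$ with rectangles contained in $C$, rather than invoking Riemann integrability of $\chi_C$ and dyadic Darboux sums as you do, but the content is the same.
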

\proof
For $t >0$ we define a measure $\mu_t$ on $\R^2$ by 
$$
\mu_t := \frac{1}{t^n} \sum_{z \in \Z^2} \delta (\frac{z}{t})
$$
where $\delta(z) $ is the Dirac measure at $z$. Observe that 
$$
\mu_t (C) = \frac{1}{t^2} \sharp \left( tC \cap \Z^2 \right)
$$
so we want to prove that $\lim_{t \rightarrow + \infty} \mu_t (C) = \mbox{ Leb } (C)$.
Let us first evaluate $\mu_t$ on rectangles. Take real numbers $a \leq b, c \leq d$. We have 
\begin{eqnarray}
\mu_t \left( \left[ a,b\right] \times \left[ c,d\right] \right) &\leq & \frac{1}{t^2} \left( E(t(b-a)) +1 \right) \left( E(t(d-c)) +1 \right) \\
 &\leq & \frac{1}{t^2} \left( t(b-a) +1 \right) \left( t(d-c) +1 \right) \\
 &\leq & \left( b-a + \frac{1}{t} \right) \left( d-c +\frac{1}{t} \right) \label{appendice, maj}
 \end{eqnarray}
 and similarly
\begin{eqnarray}
\mu_t \left( \left] a,b\right[ \times \left] c,d\right[ \right) &\geq & \left( E(t(b-a)) -1 \right) \left( E(t(d-c)) -1 \right)\\
 &\geq & \frac{1}{t^2} \left( t(b-a) -2 \right) \left( t(d-c) -2 \right) \\
 &\geq & \left( b-a - \frac{2}{t} \right) \left( d-c -\frac{2}{t} \right). \label{appendice, min}
 \end{eqnarray}
 Now pick $\epsilon >0$. Since $\mbox{ Leb } (\partial C)=0$, we may cover the compact set $\partial C$ by a a finite number of open rectangles $R_1, \ldots R_k$ such that 
 \begin{equation}\label{appendice, leq epsilon}
 \mbox{ Leb } (R_1 \cup \ldots \cup R_k) < \epsilon.
 \end{equation}
Take open rectangles $R_{k+1}, \ldots R_p$ contained in $C$ such that $C \subset R_1 \cup \ldots \cup R_p$. Then 
\begin{eqnarray*}
\mu_t \left( R_{k+1}\cup \ldots \cup R_p \right) &\leq & \mu_t (C) \leq \mu_t \left( R_{1}\cup \ldots \cup R_p \right)\\
 \mbox{ Leb }\left( R_{k+1}\cup \ldots \cup R_p \right) &\leq & \mbox{ Leb } (C) \leq \mbox{ Leb } \left( R_{1}\cup \ldots \cup R_p \right)
\end{eqnarray*} 
which, by the inequality (\ref{appendice, leq epsilon}), entails 
\begin{equation}\label{appendice, leb compris}
\mbox{ Leb } \left( R_{1}\cup \ldots \cup R_p \right) -\epsilon \leq \mbox{ Leb } (C) \leq \mbox{ Leb }\left( R_{k+1}\cup \ldots \cup R_p \right) + \epsilon
\end{equation}

Now by the inequalities (\ref{appendice, maj}, \ref{appendice, min}), for each $i=1, \ldots p$, $\mu_t (R_i) $ converges to 
$ \mbox{ Leb } (R_i)$, so there exists a real number $T$ such that for all $t \geq T$, we have
\begin{eqnarray*}
\mu_t \left( R_{k+1}\cup \ldots \cup R_p \right) & \geq & \mbox{ Leb } \left( R_{k+1}\cup \ldots \cup R_p \right) -\epsilon \\
\mu_t \left( R_{1}\cup \ldots \cup R_p \right) & \leq & \mbox{ Leb } \left( R_{1}\cup \ldots \cup R_p \right) +\epsilon .
\end{eqnarray*}
Then by (\ref{appendice, leb compris}), for all $t \geq T$, we have
$$
| \mu_t (C) - \mbox{ Leb }(C) | \leq 4 \epsilon
$$
which, $\epsilon$ being arbitrary, proves the convergence of $\mu_t (C)$ to $ \mbox{ Leb }(C)$, and the proposition.
\qed 
 
 {\small

\bigskip

\noindent

D\'epartement de Math\'ematiques, Universit\'e Montpellier 2, France\\
e-mail: massart@math.univ-montp2.fr

\noindent

D\'epartement de Math\'ematiques, Universit\'e de Fribourg, Suisse\\
e-mail: hugo.parlier@unifr.ch


\begin{thebibliography}{99}
%
\bibitem[BM08]{nonor} F. Balacheff, D. Massart {\em Stable norms of non-orientable surfaces}, 
Ann. Inst. Fourier (Grenoble), 58 no. 4 (2008), p. 1337-1369
%
\bibitem[BC08]{BC} P. Bernard, G. Contreras 
{\em A generic property of families of Lagrangian systems } Annals of Math., 167, no. 3 (2008)
%
\bibitem[F74]{Federer} H. Federer, H {\em Real flat chains, cochains and
variational problems} Ind. Univ. Math. J. {\bf 24}, 351--407 (1974)
%
\bibitem[GLP]{GLP} M. Gromov, J. Lafontaine, P.Pansu {\em Structures m{\'e}triques pour les vari{\'e}t{\'e}s riemanniennes} CEDIC-Fernand Nathan. Paris, (1981)
%
\bibitem[KH]{Katok} A. Katok, B. Hasselblatt {\em Introduction to the modern theory of dynamical systems} Encyclopedia of Mathematics and its Applications, 54. Cambridge University Press, Cambridge, 1995.
%
\bibitem[KMS86]{KMS} S. Kerckhoff, H. Masur, J. Smillie
{\em Ergodicity of billiard flows and quadratic differentials}
Ann. of Math. (2) 124 (1986), no. 2, 293-311. 
%
\bibitem[Mn92]{Mane92} 
R. Ma\~n\'e 
{\em On the minimizing measures of Lagrangian dynamical systems} 
Nonlinearity {\bf 5} (1992), no. 3, 623--638.
%
\bibitem[Mn96]{Mane} 
R. Ma\~n\'e 
{\em Generic properties and problems of
minimizing measures of Lagrangian systems} 
Nonlinearity {\bf 9} (1996), no. 2, 273--310.
%
\bibitem[M69]{Margulis}
G. Margulis {\em On some applications of ergodic theory to the study of manifolds on negative curvature} Fundam. Anal. Appl. 3 (1969), 89-90.
%
\bibitem[Mt97]{gafa} D. Massart 
{\em Stable norms of surfaces: local structure of the unit ball at rational directions} Geom. Funct. Anal. 7 (1997), {\bf 6}, 996-1010.
%
\bibitem[Ms90]{Masur} H. Masur {\em The growth rate of trajectories of a quadratic differential} Ergodic Theory Dynam. Systems 10 (1990), no. 1, 151-176.
%
\bibitem[Mr91]{Mather91} J. N. Mather 
{\em Action minimizing invariant measures for positive definite 
Lagrangian systems} Math. Z. {\bf 207}, 169-207 (1991).
%
\bibitem[McSP08]{McShane-Parlier} G. McShane, H. Parlier
{\em Multiplicities of simple closed geodesics and hypersurfaces in Teichm\H{u}ller space}, Geom. Topol. , vol. 12 (2008) no. 4, pp. 1883-1919

%
\bibitem[McSR95i]{McS-R1} G. McShane, I. Rivin
{\em A norm on homology of surfaces and counting simple geodesics}, 
Internat. Math. Res. Notices, {\bf 2} (1995), 61--69.
%
\bibitem[McSR95ii]{McS-R2} G. McShane, I. Rivin
{\em Simple curves on hyperbolic tori} C. R. Acad. Sci. Paris S\'er. I Math. 320 (1995), no. 12, 1523-1528. 

%
\bibitem[PS87]{Philips-Sarnak} R. Phillips, P. Sarnak {\em Geodesics in homology classes} Duke Math. J. 55 (1987), 287-297.
%
\end{thebibliography}
\end{document}